\crefname{hypothesis}{Hypothesis}{Hypotheses}
\title{Multidimensional Kyle-Back model with a risk averse informed trader\thanks{This version: November 1st, 2021.
\funding{This work was supported in part by NSF Grant DMS 2007826.}}}
\author{Shreya Bose\thanks{Department of Mathematics, Florida State University, Tallahassee, FL  
  (\email{sb18m@my.fsu.edu}}).
\and Ibrahim Ekren\thanks{Department of Mathematics, Florida State University, Tallahassee, FL 
  (\email{iekren@fsu.edu}).}
}
\newtheorem{assumption}{Assumption}[section]
\newcommand{\pa}{{\partial}}
\newcommand{\ignore}[1]{}
\DeclareMathOperator*{\argmin}{arg\,min}
\newcommand{\cA}{\mathcal{A}}
\newcommand{\cF}{\mathcal{F}}
\newcommand{\cH}{\mathcal{H}}
\newcommand{\cM}{\mathcal{M}}
\newcommand{\cS}{\mathcal{S}}
\newcommand{\Gam}{{\Gamma}}
\def \a{\alpha}
\def \e{\varepsilon}
\def \1{\mathbf 1}
\def\E{\mathbb{E}}
\def\P{\mathbb{P}}
\def\R{\mathbb{R}}
\newcommand{\comm}[1]{}
\begin{document}

\maketitle

\begin{abstract}
 We study the continuous time Kyle-Back model with a risk averse informed trader. We show that in a market with multiple assets and non-Gaussian prices an equilibrium exists. The equilibrium is constructed by considering a Fokker-Planck equation and a system of partial differential equations that are coupled with an optimal transport type constraint at maturity. 

\end{abstract}

\begin{keywords}
Kyle’s model with risk averse informed trader, optimal transport, filtering.
\end{keywords}

\begin{AMS}
MS subject classifications. Primary 60H30, 60J60; secondary 91B44
\end{AMS}

\section{Introduction}

The objective of this paper is to continue the study of Kyle-Back models with a risk averse informed trader using the optimal transport theory based methodology initiated in \cite{cel,bose2020kyle}. In this paper, we study markets with multiple stocks and show the existence of equilibrium under the assumption of log-concavity of the distribution of the fundamental value of the assets. 

Kyle's model, introduced in the seminal work \cite{kyle}, has been one of the most influential models in finance. In its original form, the model shows how an equilibrium between a profit maximizing informed trader and a risk-neutral market maker is reached. In this equilibrium, an informed trader who knows the fundamental value of an asset trades optimally to maximize his expected profit against a market maker. The market maker who has inferior information observes the total demand of the informed trader and noise traders and quotes a rational price for the asset. In this seminal work, the distribution of the fundamental value is assumed to be Gaussian and the pricing rule of the market maker is particularly simple. Indeed, the increments of the price quoted by the market maker is proportional to the total demand received. 

A large number of extensions of this model has been considered in the literature. In particular, the risk neutral single-period Kyle models are studied in \cite{caballe1994imperfect,pasquariello2015strategic,garcia2020multivariate}. In discrete time, the risk aversion of the informed trader is studied in \cite{subrahmanyam1991risk}.
The continuous-time model with non-Gaussian fundamental value was introduced in \cite{ba}. Many extensions of the continuous time model have been studied, see \cite{ba,back2000imperfect,baruch2002insider,back2004information,caldentey2010insider,ccd,anderson2013dynamic,ccetin2018financial,back2018identifying}.

Recently, it was shown in \cite{cel} that there exists a deep connection between optimal transport theory and Kyle-Back models and many financially important quantities can be computed using the Monge-Kantorovich duality\footnote{We refer to \cite{galichon2016optimal,santambrogio2015optimal,v} for a summary of fundamental results in optimal transport theory. }. The connection stems from the fact that in many versions of Kyle's model, the strategy of the informed trader has the inconspicuous trading property as defined in \cite{cho}. This property means that the current trading rate of the informed trader has zero mean conditional to the information of the market maker . Such a property creates distributional constraints on the total demand received by the market maker  at maturity. Thus, a natural pricing rule at maturity for the market maker  is to use the optimal transport map (or Brenier's map) as defined in \cite{b,m}. The strategy of the informed trader can also be interpreted as the Brownian bridge whose final condition is determined by an optimal transport map and the private information of the informed trader. Additionally, the potential functions, which are the optimizers of the dual formulation of the optimal transport problem, provides expressions for the profits of the informed trader and the market maker, see \cite[Theorem 3.2 and Corollary 3.1]{cel}. 

In this paper, we aim to extend the results in \cite{bose2020kyle} to a multi-dimensional case. A fundamental challenge in treating the case of the risk-averse insider is the fact that the total demand $Y_t$ is not a sufficient statistic for the pricing rule of the market maker and the strategy of the informed trader. This point was noted in \cite{cho}, where the author shows that both the pricing rule and the strategy of the informed trader are path-dependent functionals of $Y$, see also \cite{cd}. It is also shown in \cite{cho} that the relevant statistic is a stochastic integral $\xi_t=\int_0^t \lambda_s dY_s$ driven by $Y$ where the integrand $\lambda_t$, called the price pressure, is a deterministic function of time. The price pressure is then determined by an algebraic equation. Given the fact that the price pressure is taken to be deterministic, it is shown in \cite{cho} that an equilibrium can only exist if the fundamental value of the asset is Gaussian. 

In \cite{bose2020kyle}, in order to relax the Gaussianity assumption of the fundamental value, the authors allow the price pressure to be a function of $(\xi_t)$. Thus, $(\xi_t)$ solves a stochastic differential equation driven by $Y$. Then, the algebraic condition identifying the price pressure was shown to be an optimal transport type condition between the law of $\xi_T$ in equilibrium and the belief of the market maker on the fundamental value. This condition yields, in the context of \cite{bose2020kyle}, to the existence of a fixed-point between a Fokker-Planck equation and a quasilinear parabolic partial differential equation. This fixed point allows the construction of an equilibrium. 

The multi-asset framework studied in this paper brings forward major challenges. First of all, although the quasilinear equation describing the pricing rule in \cite{bose2020kyle} can be stated in this multi-asset framework, we are not able to construct other functions needed to carry out the existence proof. Thus, we derive novel tools to construct the price pressure function. The main advantage of this methodology is to completely avoid the study of the quasilinear pricing equation. The solution of this equation is readily provided from a first order optimality condition of a convex conjugation. 

Secondly, in our multi-asset framework the optimal transport map is not explicit and we need to prove the continuous dependence of the transport maps as a function of marginals of the transport problem. Given the fact that the PDE methods in \cite{bose2020kyle} require strong regularity assumptions, it was conjectured that the existence of an equilibrium for the multi-asset problem studied here would require strong continuous dependence estimates of the transport maps. However, our novel optimization theory based construction presented here
do not require stringent condition on the functions defined. Thus, we are able to obtain the continuous dependence estimates needed with tools available in the literature, see \cite[Corollary 5.23]{v}. 

Our optimal transport theory based construction also grandly simplifies the filtering problem of the market maker. Indeed, we show that in the equilibrium we construct, the solution of the Kushner's equation associated to the filtering problem of the market maker is solved by the transition density of $(\xi_t)$ in equilibrium. In fact, the optimal transport type constraints defining the fixed-point are exactly the condition we need so that this property of the filtering problem holds. 

The paper is organized as follows. We introduce the problem of interest in Section \ref{s:problem}. After providing preliminary results in Section \ref{s:prem}, we state our main theorem in terms of existence of equilibrium in Section \ref{s:main}. In section \ref{s.ex}, we discuss properties of the equilibrium and compute explicitly the equilibrium for Gaussian fundamental value. In Section \ref{s.proof}, we provide the proofs of the results of the previous sections. 

\subsection{Notations}

Throughout this paper we will use the following notations. We denote by $\cS^n$ (resp. $\cS^n_{>0}$, $\cS^n_{\geq0}$) the set of $n$-dimensional symmetric (resp. symmetric positive, symmetric non-negative) matrices. The components of the vector valued functions will be denoted using superscripts, while a point $x \in\R^n =\R^{n\times 1}$ has its components written with subscripts.
For vector valued functions $u:\R^n\mapsto\R^m, u=(u^1,...,u^m)^\top$, we write the Jacobian matrix as
\[
D_xu = \begin{pmatrix} 
    \frac{\pa u^1}{\pa x_1} & \dots &  \frac{\pa u^1}{\pa x_n}  \\
    & \ddots & \\
    \frac{\pa u^m}{\pa x_1} & \dots &  \frac{\pa u^m}{\pa x_n}
    \end{pmatrix}= \begin{pmatrix} 
   Du^1 \\
   \vdots \\
    Du^m
    \end{pmatrix}\in \R^{m\times n}.
\]

Note that with this convention, in the case $m=1$, $D_xu=\left(\frac{\pa u}{\pa x_1},...,\frac{\pa u}{\pa x_n}\right)\in \R^{1\times n}$ denotes a row vector which is the transpose of its gradient. In the case $m=1$, the Hessian matrix $D^2_xu\in \cS^n$, is given by
\[
D^2_xu = \begin{pmatrix} 
    \frac{\pa^2u}{\pa x_1^2} & \dots &  \frac{\pa^2u}{\pa x_1\pa x_n}  \\
    & \ddots & \\
    \frac{\pa^2u}{\pa x_n \pa x_1} & \dots &  \frac{\pa^2u}{\pa x_n^2}
    \end{pmatrix}
\]
For simplicity of notation, we use $D$ and $D^2$ instead of $D_x$ and $D^2_x$ respectively, whenever there is no risk of confusion.
For any $A\in \R^{n\times n}$, we denote $\underline \lambda(A)$ and $\overline \lambda(A)$ respectively the smallest and largest eigenvalues of $A$. By $tr(A)$, we denote the trace of $A$ given by
\[ tr(A)=\sum_{i=1}^{n}a_{ii}.\]

Let $\Lambda=[0,T]\times C([0,T],\R^n)$ denote the set of $n$ dimensional continuous paths. We endow $\Lambda $ with the pseudo-metric $$d_\infty((t,y_\cdot),(s,\tilde y_\cdot))=|t-s|+\sup_{0\leq r\leq T}|y_{t\wedge r}-\tilde y_{s\wedge r}|.$$

Similarly to \cite{dup,cont,etz}, we say that a mapping $u:\Lambda \to \R$ is $C^{1,2}(\Lambda)$ if 
there exists three continuous mappings $\pa_t u: \Lambda\mapsto \R$, $\pa_y u:\Lambda\mapsto \R^n$, $\pa_{yy} u:\Lambda \mapsto \cS_n$ so that 
for any continuous semi-martingale $Y$ with bounded characteristics, we have the decomposition
$$du(t,Y_\cdot)=\left(\pa_t u(t,Y_\cdot)+\frac{1}{2}tr\left(\pa_{yy}u(t,Y_\cdot)\frac{d\langle Y\rangle_t}{dt}\right)\right)dt+\pa_y u^\top (t,Y_\cdot)dY_t.$$
Note that any functional which is continuous with respect to $d_\infty$ is non-anticipative. 
\section{The multivariate model setup}\label{s:problem}
Let us consider a filtered probability space $(\Omega, \cF,(\cF_t)_{t\in [0,T]},\P)$ which satisfies the usual conditions of right continuity and completeness. We assume that the probability space carries a random vector $\tilde v\in \R^n$ and a standard $n$-dimensional Brownian motion $B$. We assume that $\cF$ is the augmentation of the filtration generated by the process $t\mapsto \tilde v+B_t$ so that $\tilde v$ and $B$ are independent. 

In the economy that we consider, there are $n\geq 1$ stocks that can be traded continuously over the time interval $[0,T]$. There are three market participants namely an informed trader, noise traders and market makers. At the initial time $(t=0)$, the informed trader learns the fundamental value of the $n$ assets at the terminal time $(t=T)$, which we denote by $\tilde v \in \R^n$. 
At the terminal time, this private information becomes available to the other market participants. The noise traders trade in the $n$ assets because of some exogenous needs. The position of the informed trader at each time $t$ is denoted by $X_t \in \R^n$ and that of the noise traders is assumed to have the form $Z_t =\sigma B_t\in \R^n$, where $\sigma\in \cS^n_{>0}$ is a deterministic symmetric positive matrix. 
 
The market makers only observe the total demand $Y_t \in \R^n$ where $Y_t=X_t+Z_t$ without being able to observe the separate trades from $X_t$ and $Z_t$. They have an apriori belief on the distribution of the $n-$dimensional random variable $\tilde v$, which we denote as $\nu$. Based on the information set $(\cF_t^Y)$ of the market maker, in equilibrium, the risk neutral market maker quotes the following rational pricing rule for the $n$ stocks
\begin{align}
  P_t=\E[\tilde v|\mathcal{F}^{Y}_t]  , t \in [0,T].
\end{align}
The main objective of the informed trader is to maximize the expected profit by choosing his trading strategy $X$ optimally. As shown in \cite{cel,b}, the wealth of the informed trader at final time T is given by
\begin{align}\label{eq:wealth}
   W_T=W_T(X,H):=\int_0^T(\tilde v-H(t,X_\cdot+Z_\cdot))^\top dX_t-\sum_{i=1}^n \langle X^i,H^i(t,X_\cdot+Z_\cdot)\rangle_T.
\end{align}
We assume that the informed trader is risk averse with exponential utility 
$$-\gamma \exp(-\gamma W_T)$$ where $\gamma>0$ is her risk aversion parameter.

We make the following standing assumptions on the data of our problem. 
\begin{assumption}\label{assum:main}
The distribution $\nu$ of $\tilde v$ is $\kappa$-strongly log-concave for some $\kappa>0$, i.e. there exists a function $V$ with $D^2V\geq \kappa I_n$ such that the density of $\nu$ with respect to the Lebesgue measure is $x\in \R^n\mapsto e^{-V(x)}$. 
\end{assumption}

Before going into the definition of the equilibrium, we first define the pricing rule of the market maker followed by the class of admissible strategies of the informed trader.
As studied in \cite{cn,cho,bose2020kyle}, in equilibrium the pricing rule of the market maker will be path-dependent.  
\begin{definition}\label{def:pricing}
A pricing rule is a mapping $H:\Lambda\mapsto \R^n$ so that $H^i\in C^{1,2}(\Lambda)$ for all $i=1,\ldots, n$ and satisfies the integrability condition
\begin{align}
\E[|H(T,Z_\cdot)|^2]+\int_0^T \E[|H(t,Z_\cdot)|^2]dt<\infty.
\end{align}
We denote the set of pricing rules by $\cH$.
\end{definition}

\begin{definition}\label{def:strategy} For any given pricing rule $H \in \cH$, a trading strategy for the informed trader is a continuous integrable semi-martingale $X$ with $X_0=0$, adapted to the filtration $\cF$, and satisfying 
\begin{align}\label{eq:addx}
 \E\left[e^{\frac{\gamma^2}{2}\int_0^T |\sigma (\tilde v-H(t,X_\cdot+Z_\cdot))|^2dt}\right]<\infty
\end{align}
Given $H \in \cH$, we denote the set of admissible strategies of the informed trader by $\cA(H)$.

\end{definition}

The objective of the paper is to establish the existence of an equilibrium defined as below. 
\begin{definition}A pricing rule $H^*\in \cH$ of the market maker and a trading strategy $X^*\in \cA(H^*)$ for the informed trader is said to form an equilibrium if the following two conditions hold.
\begin{itemize}
    \item Rationality of the pricing rule: If the informed trader chooses the trading strategy $X^*$, then $P_t=H^*(t,Y_\cdot)=\E[\tilde v|\mathcal{F}^{Y}_t]$ and $P_T = \tilde v$ a.s.
\item Profit maximization condition:  If the market maker's pricing rule is $P_t=H^*(t,Y_\cdot)$, then $X^*$ is a maximizer of 
\begin{align}\label{def:opt}
    \sup_{X \in \cA(H^*) }\E\left[-\gamma \exp(-\gamma W_T(X,H^*))\right].
\end{align}
\end{itemize}
\end{definition}

\section{Preliminary Results}\label{s:prem}
Denote by 
$$C_l=\{\phi\in C^2(\R^n;\R), 0\leq D^2 \phi<l Id,\, \phi(0)=0\}$$ the set of convex functions with second derivatives bounded by $l Id$. 
Let $0\leq l<\frac{1}{\overline\lambda^2( \sigma)  \gamma T}$ and $\phi\in C_{l}$ be fixed and define on $[0,T]\times \R^n$ the functions $(E,\chi,\Gamma,P)=(E^\phi,\chi^\phi,\Gamma^\phi,P^\phi)$ by
\begin{align}\label{eq:defe}
    E(t,z)&:=\frac{1}{\gamma} \ln\E\left[\exp\left(\gamma \phi(z+\sigma(B_T-B_t))\right)\right]\in\R\\
    \chi(t,\xi)&:=\argmin_{z\in \R^n} \left\{E(t,z)+\frac{1}{2\gamma(T-t)}|\sigma^{-1}(\xi-z)|^2\right\}\in \R^n\label{eq:defchi}\\
\Gamma(t,\xi)&:= E(t,\chi(t,\xi))=\min_{z\in \R^n} \left\{E(t,z)+\frac{1}{2\gamma(T-t)}|\sigma^{-1}(\xi-z)|^2\right\}\in \R\label{eq:defgamma}
\\
P(t,\xi)&:= (D E)^\top(t,\chi(t,\xi))\in \R^{ n}.\label{eq:defp}
\end{align}
The following lemma shows that these functions are well-defined and provides their properties that we will need to construct an equilibrium.
\begin{lemma}\label{lem:prep}
For $0\leq l<\frac{1}{\overline\lambda^2( \sigma)  \gamma T}$, and $\phi\in C_l$, the function defined by 
$$(t,\xi,z)\in [0,T]\times \R^n\times \R^n\mapsto E(t,z)+\frac{1}{2\gamma(T-t)}|\sigma^{-1}(\xi-z)|^2$$
is strongly convex in $z$ for all $t<T$, and $\chi(t,\xi)$ is its unique minimizer in $z$ for all $(t,\xi)\in [0,T]\times \R^n$.
The functions $(\chi,\Gamma,P)$ 
are $C^{1,2}([0,T)\times \R^n)\cap C^{0}([0,T]\times \R^n)$ solutions of 
\begin{align}
    \frac{\pa \Gamma}{\pa t}(t,\xi)+\frac{1}{2}tr\Big(\sigma^\top (D\chi)^{-\top}(t,\xi)D^2 \Gamma(t,\xi)(D\chi)^{-1}(t,\xi)\sigma\Big)&=\frac{\gamma |\sigma P(t,\xi)|^2}{2}\label{eq:system1}\\
 \frac{\pa \chi^i}{\pa t}(t,\xi)+\frac{1}{2}tr\Big(\sigma^\top (D\chi)^{-\top}(t,\xi)D^2 \chi^{i}(t,\xi)(D\chi)^{-1}(t,\xi)\sigma\Big)&={\gamma\Big(\sigma^2 P(t,\xi)\Big)^i}\label{eq:system2}\\
 \frac{\pa P^i}{\pa t}(t,\xi)+\frac{1}{2}tr\Big(\sigma^\top (D\chi)^{-\top}(t,\xi)D^2 P^i(t,\xi)(D\chi)^{-1}(t,\xi)\sigma\Big)&=0\label{eq:pdep}.
\end{align}
for all $i=1,\ldots, n$, with final conditions
\begin{align}
    P(T,\xi)={(D\phi(\xi))^\top},\, \chi(T,\xi)=\xi,\, \Gamma(T,\xi)=\phi(\xi).
\end{align}
Additionally, let $\phi_k,\phi\in C_l$ be such that 
\begin{align}\label{eq:cvcl}
\phi_k\to \phi\mbox{ uniformly on compact sets of }\R^n \mbox{ as }k\to \infty.
\end{align}
Then, 
\begin{align}\label{eq:contd}
(\Gamma^{\phi_k}(0,0),\chi^{\phi_k}(0,0))\to(\Gamma^{\phi}(0,0),\chi^{\phi}(0,0)).
\end{align}
\end{lemma}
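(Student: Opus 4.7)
The plan is to treat the three assertions in the stated order, exploiting the Cole-Hopf/log-exponential structure of $E$.

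\textbf{Well-posedness of $\chi$.} The auxiliary function $u(t,z):=\E[e^{\gamma\phi(z+\sigma(B_T-B_t))}]=e^{\gamma E(t,z)}$ is finite and smooth: $\phi\in C_l$ with $\phi(0)=0$ satisfies the semi-concavity bound $\phi(x)\leq\frac{l}{2}|x|^2+|D\phi(0)|\,|x|$, and the strict inequality $l\overline\lambda^2(\sigma)\gamma T<1$ ensures $\E[e^{\gamma l|\sigma B_T|^2/2}]<\infty$. Two differentiations under the expectation give
\[D^2_zE(t,z)=\E_{\widetilde\P}[D^2\phi(z+\sigma(B_T-B_t))]+\gamma\,\mathrm{Cov}_{\widetilde\P}(D\phi,D\phi)\;\geq\;0,\]
where $d\widetilde\P\propto e^{\gamma\phi(z+\sigma(B_T-B_t))}d\P$. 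Since the Hessian in $z$ of $\frac{|\sigma^{-1}(\xi-z)|^2}{2\gamma(T-t)}$ dominates $\frac{1}{\gamma(T-t)\overline\lambda^2(\sigma)}I$, the functional is strongly convex in $z$ for every $t<T$ and admits a unique minimizer $\chi(t,\xi)$.

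\textbf{PDE system.} Because $u$ solves the backward heat equation $\partial_tu+\frac{1}{2}\mathrm{tr}(\sigma^2D^2u)=0$, a Cole-Hopf computation gives the HJB identity
\[\partial_tE+\tfrac{1}{2}\mathrm{tr}(\sigma^2D^2E)+\tfrac{\gamma}{2}(DE)\sigma^2(DE)^\top=0,\qquad E(T,\cdot)=\phi.\]
The first-order condition reads $\xi=\chi(t,\xi)+\gamma(T-t)\sigma^2DE^\top(t,\chi(t,\xi))$, and the previous strong convexity makes $(D\chi)^{-1}=I+\gamma(T-t)\sigma^2D^2E(t,\chi)$ invertible, so the implicit function theorem yields $\chi\in C^{1,2}([0,T)\times\R^n)$ (and $C^\infty$ in space by Gaussian smoothing of $u$). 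To obtain \eqref{eq:system1}--\eqref{eq:pdep}, I would introduce the auxiliary diffusion $d\tilde\xi_t=(D\chi(t,\tilde\xi_t))^{-1}\sigma\,dW_t$; an It\^o computation then shows $Z_t:=\chi(t,\tilde\xi_t)$ obeys $dZ_t=\gamma\sigma^2DE^\top(t,Z_t)dt+\sigma dW_t$. Applying It\^o to $\chi(t,\tilde\xi_t)=Z_t$, to $P(t,\tilde\xi_t)=DE^\top(t,Z_t)$ (using the $z$-derivative of the HJB identity), and to $\Gamma(t,\tilde\xi_t)=E(t,Z_t)$ (using the HJB identity directly), the drifts come out to be $\gamma\sigma^2P$, $0$, and $\tfrac{\gamma}{2}|\sigma P|^2$ respectively. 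Matching these drifts with the generator $\partial_t(\cdot)+\tfrac{1}{2}\mathrm{tr}(\sigma^\top(D\chi)^{-\top}D^2(\cdot)(D\chi)^{-1}\sigma)$ of $\tilde\xi$ yields the three PDEs, and the terminal conditions are immediate from $\chi(T,\xi)=\xi$. I anticipate this step to be the main technical obstacle, because translating the It\^o drift identities into the precise tensorial form of the statement requires careful matrix algebra that uses the symmetry of $D^2E$ and the identity $(D\chi)^{-1}\sigma^2(D\chi)^{-\top}=d\langle\tilde\xi\rangle/dt$.

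\textbf{Continuous dependence.} For $\phi_k\to\phi$ uniformly on compacts with $\phi_k,\phi\in C_l$, the $C_l$-bound together with $\sup_k|D\phi_k(0)|<\infty$ (a standard consequence of locally uniform convergence of convex functions, since uniform boundedness on a compact forces a uniform Lipschitz bound on any interior compact) provides a common integrable majorant for $e^{\gamma\phi_k(z+\sigma(B_T-B_t))}$ of the form $\exp(\gamma(\tfrac{l}{2}|z+\sigma(B_T-B_t)|^2+c|z+\sigma(B_T-B_t)|))$ on compact $z$-sets, so dominated convergence yields $E^{\phi_k}(0,\cdot)\to E^\phi(0,\cdot)$ locally uniformly. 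Since the functionals $z\mapsto E^{\phi_k}(0,z)+\frac{|\sigma^{-1}z|^2}{2\gamma T}$ are uniformly strongly convex with a constant independent of $k$, stability of minimizers under locally uniform convergence forces $\chi^{\phi_k}(0,0)\to\chi^\phi(0,0)$, and joint continuity of $E^\phi$ then delivers $\Gamma^{\phi_k}(0,0)=E^{\phi_k}(0,\chi^{\phi_k}(0,0))\to E^\phi(0,\chi^\phi(0,0))=\Gamma^\phi(0,0)$.
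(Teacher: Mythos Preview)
Your proposal is correct and follows essentially the same route as the paper: convexity of $E$ via the change-of-measure representation of $D^2E$, the first-order condition $\xi=\chi+\gamma(T-t)\sigma^2(DE)^\top(t,\chi)$ together with the HJB identity for $E$ to derive \eqref{eq:system1}--\eqref{eq:pdep}, and stability of minimizers of uniformly strongly convex functionals for the continuous-dependence statement. The only cosmetic difference is that you package the PDE derivation as an It\^o computation along the auxiliary diffusion $d\tilde\xi_t=(D\chi)^{-1}\sigma\,dW_t$, whereas the paper differentiates the identity $R(t,\chi(t,\xi))=\xi$ directly in $(t,\xi)$ and forms the trace by hand; the two are the same chain-rule computation, and your ``main technical obstacle'' (identifying the drift of $Z_t=\chi(t,\tilde\xi_t)$ as $\gamma\sigma^2 DE^\top$) is exactly the paper's manipulation of $\partial_t R+\tfrac12\mathrm{tr}(\sigma^2 D^2R)$ via the $z$-gradient of the HJB equation.
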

\begin{remark}
(i) Using the first order optimality condition
\begin{align}\label{eq:chip}
 \chi(t,\xi)=\xi-(T-t)\gamma \sigma^2 P(t,\xi) 
\end{align}
of the optimization problem \eqref{eq:defchi}, one can eliminate $D\chi$ from the equation \eqref{eq:pdep} so that $P$ solves a quasilinear parabolic equation. 
In the one dimensional case of \cite{bose2020kyle}, the function $P$ was directly constructed as the solution of such an equation, see \cite[Equation (3.1)]{bose2020kyle}.
 After constructing $P$, one can obtain $\Gamma$ and $\chi$ by simple integration of $P$. However, we are not able to extend this methodology to the multi-dimensional case of this paper where in order to be able to integrate $P$, we need some symmetry properties of $D\chi$ that we are unable to check. 
 
(ii) The novel multi-dimensional construction presented here also removes strong regularity requirement on the class $C_l$ imposed in \cite{bose2020kyle}. Because of these strong regularity requirements, in \cite{bose2020kyle}, it was conjectured that the extension of \cite{bose2020kyle} to multi-dimension would require the proof of strong continuous dependence estimates for the Monge-Ampere equation. However, by removing these regularity requirements, our novel methodology is able to use simple continuous dependence estimates of optimal transport plans to construct the equilibrium. 

\end{remark}

As mentioned, in the equilibrium we construct, $Y_t$ is not a sufficient statistics to define the strategies of both agents. We introduce below a novel state process $(\xi_t)$ allowing us to define these strategies.  
The next Lemma provides a solution of the Fokker-Planck equation associated with this state variable in equilibrium. 
\begin{lemma}\label{lem:fp}
For $0\leq l<\frac{1}{\overline\lambda^2( \sigma)  \gamma T}$, let $(\xi^0_t)_{t\in [0,T]}=(\xi^{\phi,0}_t)_{t\in [0,T]}$ be the solution of the stochastic differential equation
\begin{align}\label{eq:defxi0}
    \xi^0_t=\int_0^t (D\chi^\phi)^{-1}(s,\xi^0_s)\sigma dB_s
\end{align}
and define for $x,y,\in \R^n$ and $0\leq r<t\leq T,$ the function $G=G^\phi$ by 
\begin{align}\label{eq:transition}
    G(r,x,t,y)=\frac{det(D\chi(t,y))\exp\left(\gamma \Gamma(t,y)-\gamma\Gamma(r,x)-\frac{|\sigma^{-1}(\chi(t,y)-\chi(r,x))|^2}{2(t-r)}\right)}{det(\sigma)(2\pi (t-r))^{n/2}}.
\end{align}
Then, $G$ is the transition density of $\xi^0$ and $G^\phi$ only depends on $D^2\phi$ (and not on $D\phi(0)$ which would fully determine $\phi$). We denote $\mu^\phi$, the distribution whose density is $G^\phi(0,0,T,\cdot)$.
\end{lemma}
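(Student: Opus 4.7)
My plan is to transform \eqref{eq:defxi0} via $\eta_t:=\chi(t,\xi^0_t)$ into an SDE with $\sigma\,dB_t$ diffusion and an explicit drift, identify its transition density by a Doob $h$-transform (equivalently, the Cole-Hopf trick), and then push the density back to $\xi^0$ through the diffeomorphism $\chi(t,\cdot)$. The dependence of $G^\phi$ on $D^2\phi$ alone will follow from a symmetry under gradient shifts $\phi\mapsto\phi+c^\top x$.

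By Lemma \ref{lem:prep}, the coefficient $(D\chi)^{-1}\sigma$ in \eqref{eq:defxi0} is smooth with $D\chi$ everywhere invertible (the strong convexity of the minimand in \eqref{eq:defchi} is exactly what provides this), so \eqref{eq:defxi0} admits a strong solution with a smooth transition density on $\{r<t\}$. Applying It\^o's formula to $\eta^i_t=\chi^i(t,\xi^0_t)$ with $d\langle\xi^0\rangle_t=(D\chi)^{-1}\sigma^2(D\chi)^{-\top}dt$, the drift $\pa_t\chi^i+\frac12 tr((D\chi)^{-1}\sigma^2(D\chi)^{-\top}D^2\chi^i)$ collapses to $\gamma(\sigma^2 P(t,\xi^0_t))^i$ by the PDE \eqref{eq:system2}, while the martingale part $D\chi^i(D\chi)^{-1}\sigma\,dB_t$ equals $(\sigma\,dB_t)^i$. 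Since $P(t,\xi^0_t)=(DE)^\top(t,\eta_t)$, this yields
\begin{equation*}
 d\eta_t=\gamma\sigma^2 (DE)^\top(t,\eta_t)\,dt+\sigma\,dB_t,\qquad \eta_0=\chi(0,0).
\end{equation*}

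Next, set $u(t,z):=e^{\gamma E(t,z)}=\E[e^{\gamma\phi(z+\sigma(B_T-B_t))}]$. The smallness condition $l<1/(\overline\lambda^2(\sigma)\gamma T)$ keeps $u$ finite and smooth on $[0,T]\times\R^n$, Feynman-Kac gives $\pa_t u+\frac12 tr(\sigma^2 D^2u)=0$, and $\gamma(DE)^\top=(Du)^\top/u$. Thus $\eta$ is the Doob $h$-transform of the reference process $W_t=\eta_0+\sigma B_t$ with space-time harmonic weight $u$, so by the standard $h$-transform formula
\begin{equation*}
 p_\eta(r,a,t,b)=\frac{u(t,b)}{u(r,a)}\cdot\frac{\exp(-|\sigma^{-1}(b-a)|^2/(2(t-r)))}{\det(\sigma)(2\pi(t-r))^{n/2}}.
\end{equation*}
Uniqueness of the minimizer in \eqref{eq:defchi} for every $\xi$ (injectivity), together with the FOC \eqref{eq:chip} solved for $\xi$ given $\chi$ (surjectivity), shows that $\chi(t,\cdot)\colon\R^n\to\R^n$ is a global $C^2$ diffeomorphism. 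Changing variables back, $G(r,x,t,y)=p_\eta(r,\chi(r,x),t,\chi(t,y))\det(D\chi(t,y))$, which, using $E(t,\chi(t,\cdot))=\Gamma(t,\cdot)$, is precisely \eqref{eq:transition}.

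For the last assertion, let $\tilde\phi(x)=\phi(x)+c^\top x$ with $c\in\R^n$. Completing the square with the substitution $\tilde z=z+\gamma(T-t)\sigma^2 c$ first inside \eqref{eq:defe} and then in the minimization \eqref{eq:defchi} yields $\chi^{\tilde\phi}(t,\xi)=\chi^\phi(t,\xi)-\gamma(T-t)\sigma^2 c$, hence $D\chi^{\tilde\phi}=D\chi^\phi$. Consequently the SDE \eqref{eq:defxi0} is identical under $\phi$ and $\tilde\phi$, so $G^{\tilde\phi}=G^\phi$; since $\phi(0)=0$ identifies $\phi$ uniquely from $(D\phi(0),D^2\phi)$ and shifts of $D\phi(0)$ leave $G^\phi$ unchanged, $G^\phi$ depends only on $D^2\phi$. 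The principal technical hurdle will be the rigorous Doob $h$-transform step: one must verify positivity and sufficient regularity of $u$, the true-martingale property of the associated Girsanov density, and the global invertibility of $\chi(t,\cdot)$ — all comfortably enabled by the constraint on $l$.
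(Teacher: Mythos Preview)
Your proposal is correct and follows essentially the same route as the paper: change variables to $\eta_t=\chi(t,\xi^0_t)$, identify the resulting SDE as the Doob $h$-transform of $\sigma$-Brownian motion with space-time harmonic weight $u=e^{\gamma E}$ (the paper phrases this step via an explicit Girsanov density and invokes \cite[Theorem 3.1]{tt}), and then pull back through the diffeomorphism $\chi(t,\cdot)$. For the dependence on $D^2\phi$ you argue at the SDE level (the identity $D\chi^{\tilde\phi}=D\chi^\phi$ leaves \eqref{eq:defxi0} invariant), which is slightly more economical than the paper's direct substitution of the explicit $\chi^A,\Gamma^A$ into \eqref{eq:transition}, but both rest on the same key computation $\chi^{\tilde\phi}(t,\xi)=\chi^\phi(t,\xi)-\gamma(T-t)\sigma^2 c$.
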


\begin{remark}
(i) The main assumption we make to establish the existence of equilibrium is based on \cite[Hypothesis 4]{cho} where the author postulates that in equilibrium both the pricing rule of the market maker and the strategy of the informed trader are functions of a novel state variable $(\xi_t)_{t\in[0,T]}$ that is a stochastic integral driven by $Y$ of the form $\xi_t=\int_0^t \lambda_s Y_s$ for a deterministic function $\lambda$.  In \cite{bose2020kyle}, it is shown that this novel state variable solves a stochastic differential equation driven by $Y$. 

In the equilibrium we construct, the strategy of the informed trader satisfies the inconspicuous trading property of \cite{cho}. Thus, under $\P$, the $\cF^Y$-distribution of $Y$ is the distribution of $\sigma B$. In this sense, the solution of \eqref{eq:defxi0} describes the distribution of the new state variable under $\cF^Y$ and in equilibrium. In fact, we only need $\xi^0$ through its distribution described with the transition density $G$ and not the exact value of the random variables $(\xi^0_t)$.

(ii) Through an application of the Ito's Lemma to $\Gamma(t,\xi_t)-\tilde v^\top\chi(t,\xi_t)$, the dynamics of $\Gamma,$ $\chi$, and the endogenously determined state variable $(\xi_t)$ allow us to simplify the dynamic problem of the informed trader \eqref{def:opt} into a problem of utility from terminal wealth. 
In this sense, the equations \eqref{eq:system1}-\eqref{eq:pdep}, allows us to pass from the dynamic problem of Kyle's model to a static problem of optimal transport at time $T$. 

\end{remark}

We now state the following theorem which is the main mathematical contribution of the paper and an extension of \cite[Theorem 3.1]{bose2020kyle} to the multi-dimensional case. 

\begin{theorem}\label{thm:fp}
Under Assumption \ref{assum:main}, there exists $\gamma_0>0$ so that for all $\gamma \in (0,\gamma_0)$ there exists $\phi\in C_l$ for some $l>0$ so that 
$(D_\xi\phi)_\sharp \mu^\phi=\nu$.
Equivalently $\phi$ is a convex solution of the Monge-Ampere equation
\begin{align}\label{eq:ma}
   G^\phi(0,0,T,x)=det(D_x^2\phi(x))f_\nu((D_x \phi)^\top(x)).
\end{align}
\end{theorem}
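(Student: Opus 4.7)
The plan is to set up a Schauder fixed-point problem for the map $\mathcal{T}:C_l\to C_l$ sending $\phi$ to the Brenier potential $\tilde\phi$ (normalized by $\tilde\phi(0)=0$) of the optimal transport from $\mu^\phi$ to $\nu$. Brenier's theorem produces such $\tilde\phi$ uniquely up to an additive constant, since $\mu^\phi$ is absolutely continuous by Lemma \ref{lem:fp} and $\nu$ has finite second moment under Assumption \ref{assum:main}. A fixed point of $\mathcal{T}$ is exactly what the theorem demands, and \eqref{eq:ma} is the usual change-of-variables form of $(\nabla\tilde\phi)_\sharp\mu^\phi=\nu$.

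The critical step is to choose $l$ and $\gamma_0$ so that $\mathcal{T}(C_l)\subset C_l$ for all $\gamma\in(0,\gamma_0)$. Using the terminal data $\chi^\phi(T,\cdot)=\mathrm{id}$, $D\chi^\phi(T,\cdot)=I$, $\Gamma^\phi(T,\cdot)=\phi$, formula \eqref{eq:transition} specialises at $t=T$ to
\[
G^\phi(0,0,T,y)\;\propto\;\exp\!\Bigl(\gamma\phi(y)-\tfrac{1}{2T}\bigl|\sigma^{-1}(y-\chi^\phi(0,0))\bigr|^2\Bigr),
\]
so $-\log\mu^\phi$ has Hessian $\sigma^{-2}/T-\gamma D^2\phi$. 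For $\phi\in C_l$ with $l<1/(\bar\lambda(\sigma)^2\gamma T)$, this Hessian is positive definite (so $\mu^\phi$ is uniformly strongly log-concave) and bounded above by $I/(\underline\lambda(\sigma)^2 T)$, uniformly in $\phi$. A Caffarelli-type contraction estimate—source with log-density potential Hessian $\le AI$ and target $\kappa$-strongly log-concave yielding $D^2\tilde\phi\le\sqrt{A/\kappa}\,I$—then gives $\|D^2\tilde\phi\|\le l_0:=(T\kappa\,\underline\lambda(\sigma)^2)^{-1/2}$. Choosing $\gamma_0>0$ so that $l_0<1/(\bar\lambda(\sigma)^2\gamma_0 T)$ (e.g.\ $\gamma_0=\underline\lambda(\sigma)\sqrt{\kappa}/(\bar\lambda(\sigma)^2\sqrt{T})$) supplies the invariance $\mathcal{T}(C_{l_0})\subset C_{l_0}$ for every $\gamma\in(0,\gamma_0)$.

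For continuity: if $\phi_k\to\phi$ locally uniformly within $C_{l_0}$, Lemma \ref{lem:prep} yields $(\Gamma^{\phi_k}(0,0),\chi^{\phi_k}(0,0))\to(\Gamma^\phi(0,0),\chi^\phi(0,0))$, hence $G^{\phi_k}(0,0,T,\cdot)\to G^\phi(0,0,T,\cdot)$ locally uniformly; the uniform strong log-concavity supplies tightness, so $\mu^{\phi_k}\to\mu^\phi$ narrowly and \cite[Corollary 5.23]{v} delivers $\tilde\phi_k\to\tilde\phi$ locally uniformly. For pre-compactness of $\mathcal{T}(C_{l_0})$, the bound $|\nabla\tilde\phi(0)|\le|\nabla\tilde\phi(y)|+l_0|y|$ integrated against $\mu^\phi$, combined with the uniform first-moment bounds on $\mu^\phi$ (from uniform strong log-concavity) and $\int|z|\,d\nu<\infty$, produces a uniform bound on $|\nabla\tilde\phi(0)|$; together with $D^2\tilde\phi\le l_0 I$ and $\tilde\phi(0)=0$, Arzel\`a--Ascoli yields $C^1_{\mathrm{loc}}$-precompactness. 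Schauder's theorem applied on the closed convex set $C_{l_0}$ (endowed with the $C^0_{\mathrm{loc}}$ topology, intersected with a ball $\{|\nabla\phi(0)|\le M\}$ to match the derived bound) then produces the sought-after fixed point.

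The main obstacle I anticipate is the Caffarelli-type Hessian bound on the Brenier potential in the precise "source-upper, target-lower" form used above; this is a natural extension of the classical Caffarelli contraction (source Gaussian) but requires a maximum-principle argument on the Monge--Amp\`ere equation in the style of Kolesnikov or Kim--Milman rather than being strictly off-the-shelf. Once this estimate is available, the remaining ingredients—the explicit shape of the density of $\mu^\phi$ at $t=T$, the stability result \cite[Corollary 5.23]{v}, the continuous dependence \eqref{eq:contd} from Lemma \ref{lem:prep}, and Schauder's theorem—combine in a relatively routine manner.
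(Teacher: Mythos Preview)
Your approach is essentially the paper's: Schauder fixed point on the Brenier-potential map, invariance of $C_l$ via a Caffarelli--Kolesnikov contraction estimate applied to the explicit density $G^\phi(0,0,T,\cdot)$, continuity via stability of optimal transport plans, and Arzel\`a--Ascoli for compactness. The ``main obstacle'' you anticipate is exactly what the paper invokes as \cite[Theorem~6.1]{ko}, so it is available off the shelf.

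One step in your compactness argument does not go through as written. You claim a uniform first-moment bound on $\mu^\phi$ ``from uniform strong log-concavity,'' but strong log-concavity controls only concentration about the mean, not the mean itself (consider $N(m,I)$ with $|m|\to\infty$). Without a uniform bound on $\chi^\phi(0,0)$---which can drift with $D\phi(0)$---the integral $\int|y|\,d\mu^\phi$ is not controlled over all of $C_{l_0}$. The paper handles this cleanly using the second part of Lemma~\ref{lem:fp}: $\mu^\phi$ depends only on $D^2\phi$, so $\mu^{\phi_k}=\mu^{\tilde\phi_k}$ with $\tilde\phi_k(z):=\phi_k(z)-D\phi_k(0)^\top z$, and hence $\mathcal T(\phi_k)=\mathcal T(\tilde\phi_k)$. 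One may therefore assume $D\phi_k(0)=0$ from the outset; then $|D\phi_k|$ grows at most linearly with a uniform constant, Arzel\`a--Ascoli gives a convergent subsequence in $C_l$, and continuity of $\mathcal T$ yields compactness of the image. Your moment route can be repaired by invoking this same normalization (it supplies the missing uniform bound on $\chi^\phi(0,0)$), but the paper's argument is shorter.
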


\begin{remark}
i) We note that the main contribution of Theorem \ref{thm:fp} is to establish the existence of a fixed-point. Indeed, for any $\phi\in C_l$, in Lemma \ref{lem:prep}, we define the function $\Gamma^\phi$ and $\chi^\phi$. Using these functions we can define $G^\phi$. Then, given $G^\phi$
one can find, thanks to the Brenier theorem \cite{b,m}, a convex function $\psi$ solving 
\begin{align}\label{eq:maaa}
   G^\phi(0,0,T,x)=det(D_x^2\psi(x))f_\nu((D_x \psi)^\top(x)).
\end{align}
Thus, we have defined a mapping from the set of functions $C_l$ to the set of all convex functions. Theorem \ref{thm:fp} states the existence of a fixed point for this mapping. 

(ii) The reason why we require this fixed-point comes from the fact that for such a fixed-point $(D\phi)^{-1}(\tilde v)$ (the inverse function of $D\phi:\R^n\mapsto \R^{1\times n}$) has distribution $ G^\phi(0,0,T,\cdot)$. Thus, the informed trader can find a trading strategy so that the family of random measures $G(t,\xi_t,T,\cdot)$ solves the filtering problem of the maker-maker of the unknown quantity $(D\phi)^{-1}(\tilde v)$. This crucial point simplifies the filtering problem of the market maker in equilibrium and allows us to pinpoint an equilibrium. 
\end{remark}

\section{Main result}\label{s:main}
We fix $l$ and $\phi\in C_l$ as constructed in Theorem \ref{thm:fp} and omit the dependence of different quantities in these variables. Thanks to the first order optimality condition in \eqref{eq:defchi},
we have 
$$ \chi(t,\xi)+(T-t)\gamma \sigma^2 (DE)^\top(t,\chi(t,\xi))=\xi. 
$$
Thus, $\chi$ is invertible in $\xi$ and for all $y\in C([0,T],\R^n)$, we can define the mapping $\xi :(t,y_\cdot)\in \Lambda\mapsto \R^n$ by the equality 
\begin{align}\label{eq:defxi1}
    \chi(t,\xi(t,y_\cdot))=\chi(0,0)+y_t+\int_0^t\gamma\sigma^2 P(s,\xi(s,y_\cdot))ds.
\end{align}
The following Lemma defines our candidate equilibrium strategy the informed trader and pricing rule for the market maker. 
\begin{lemma}\label{lem:add}
Under Assumption \ref{assum:main}, there exists $\gamma_1>0$ so that for all $\gamma\in (0,\gamma_1)$ and $\phi$ as in Theorem \ref{thm:fp}, the mapping $(t,y)\in \Lambda\mapsto \xi(t,y_\cdot)$ is $C^{1,2}(\Lambda)$ and the pricing rule for the market maker defined as 
\begin{align}\label{eq:pr}
   (t,y)\in \Lambda\mapsto H^*(t,y):=P^{\phi}(t,\xi(t,y_\cdot))=P(t,\xi(t,y_\cdot))
\end{align} 
is in $\cH$. The trading strategy for the informed trader defined as
\begin{align}\label{eq:ts}
    dX^*_t=\frac{(D\phi\textcolor{red}{^*})^{-1}(\tilde v)-\xi_t}{T-t}dt
\end{align}
where $\xi_t=\xi(t,Y_\cdot)$ is in $\cA(H^*)$. 
\end{lemma}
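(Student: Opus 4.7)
The plan is to proceed in three stages, establishing in turn that $\xi$ is well-defined, that it is $C^{1,2}(\Lambda)$ with $H^* \in \cH$, and that $X^*$ is admissible.

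\textbf{Stage 1 (well-definedness of $\xi$).} From the first order optimality \eqref{eq:chip}, $\chi(t,\xi) = \xi - (T-t)\gamma\sigma^2 P(t,\xi)$, so $D_\xi\chi(t,\xi) = I_n - (T-t)\gamma\sigma^2 D_\xi P(t,\xi)$. The bounds on $DP$ that follow from the PDE \eqref{eq:pdep} with terminal datum $D^2\phi < l I_n$ imply that for $\gamma$ smaller than some $\gamma_1>0$, $D_\xi\chi$ is uniformly positive definite, hence $\chi(t,\cdot)$ is a global bi-Lipschitz $C^1$-diffeomorphism of $\R^n$ with inverse $\Psi(t,\cdot)$. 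Equation \eqref{eq:defxi1} then becomes the Volterra integral equation
\[
\xi(t, y_\cdot) = \Psi\!\left(t,\, \chi(0,0) + y_t + \int_0^t \gamma\sigma^2 P(s, \xi(s, y_\cdot))\, ds\right),
\]
which a standard Picard iteration solves uniquely using the Lipschitz regularity of $\Psi$ and $P$. Continuity of $(t, y_\cdot) \mapsto \xi(t, y_\cdot)$ with respect to $d_\infty$ follows from the uniform convergence of the Picard sequence on compacts of $\Lambda$.

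\textbf{Stage 2 ($C^{1,2}$-regularity and $H^*\in \cH$).} For any continuous semimartingale $Y$ with bounded characteristics $d\langle Y\rangle_t = A_t\, dt$, I apply Itô's formula to both sides of $\chi(t, \xi(t, Y_\cdot)) = \chi(0,0) + Y_t + \int_0^t \gamma\sigma^2 P(s, \xi(s, Y_\cdot))\, ds$; substituting the PDE \eqref{eq:system2} for $\partial_t \chi^i$ and cancelling the $\gamma(\sigma^2 P)^i\, dt$ terms reduces the $i$-th component to
\[
D\chi^i(t,\xi_t)\, d\xi_t + \tfrac{1}{2} tr\!\left(D^2\chi^i(t,\xi_t)\, d\langle\xi\rangle_t\right) - \tfrac{1}{2} tr\!\left(\sigma^\top (D\chi)^{-\top} D^2\chi^i (D\chi)^{-1} \sigma\right) dt = dY^i_t.
\]
The ansatz $d\xi_t = (D\chi)^{-1}(t,\xi_t)\, dY_t + b_t\, dt$ yields, via cyclic invariance of trace and $\sigma=\sigma^\top$, an explicit formula for $b_t$ linear in $A_t - \sigma^2$; in particular $b_t = 0$ when $A_t = \sigma^2$. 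This identifies the functional Itô derivatives of $\xi$, so $\xi\in C^{1,2}(\Lambda)$, and $H^* = P\circ \xi \in C^{1,2}(\Lambda)$ by the chain rule. For the integrability required in Definition \ref{def:pricing}, substituting $Y=Z=\sigma B$ gives $A_t = \sigma^2$, so $d\xi_t = (D\chi)^{-1}(t,\xi_t)\,\sigma\, dB_t$ with $\xi_0 = 0$; strong uniqueness identifies $\xi(\cdot, Z_\cdot)$ with the process $\xi^0$ of Lemma \ref{lem:fp}. The linear growth of $P$ (inherited from $D^2\phi \le l I_n$ and the definition of $E$) together with the Gaussian-type moment bounds furnished by the transition density $G^\phi$ then yield the $L^2$ integrability.

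\textbf{Stage 3 (admissibility of $X^*$).} Set $v_* := (D\phi)^{-1}(\tilde v)$. Since $\tilde v \in \cF_0$ and $\xi_t = \xi(t, Y_\cdot)$ is constructed from $Y_t = X^*_t + Z_t$, the integrand $(v_* - \xi_t)/(T-t)$ is $\cF_t$-adapted. Inserting $dY_t = \frac{v_* - \xi_t}{T-t}\, dt + \sigma\, dB_t$ (so $A_t=\sigma^2$) into the SDE from Stage 2 produces the bridge-type equation
\[
d\xi_t = (D\chi)^{-1}(t,\xi_t)\!\left[\frac{v_* - \xi_t}{T-t}\, dt + \sigma\, dB_t\right].
\]
Because $(D\chi)^{-1}$ is bounded and uniformly close to $I_n$, standard estimates for mean-reverting SDEs with singular drift give $\xi_t \to v_*$ almost surely with bridge rate $|\xi_t - v_*| = O(\sqrt{T-t})$; consequently $X^*$ extends continuously to $T$ and is a continuous, $\cF$-adapted, integrable semimartingale with $X^*_0 = 0$. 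The main obstacle is the exponential integrability \eqref{eq:addx}: since $P(T, v_*) = (D\phi(v_*))^\top = \tilde v$ and $P$ is uniformly Lipschitz in $\xi$, we have $|\tilde v - H^*(t, Y_\cdot)| \lesssim |\xi_t - v_*|$, and combining the bridge rate with Gaussian-type tails on the law of $\xi_t$ should yield a Novikov-type bound of the form $\E\!\left[\exp\!\left(\tfrac{\gamma^2}{2}\int_0^T |\sigma(\tilde v - H^*(t, Y_\cdot))|^2\, dt\right)\right] < \infty$, possibly at the price of further lowering $\gamma_1$. This exponential moment estimate is the most delicate step and is where the bound on $\gamma$ may need to be sharpened.
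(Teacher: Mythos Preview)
Your proposal is correct and follows essentially the same route as the paper's proof: both obtain the $C^{1,2}(\Lambda)$ regularity of $\xi$ from the smoothness and invertibility of $\chi$, both establish the integrability of $H^*$ by identifying $\xi(\cdot,Z_\cdot)$ with the process $\xi^0$ of Lemma~\ref{lem:fp} and using that $P(T,\cdot)=(D\phi)^\top$ is Lipschitz (the paper then invokes the $\cF^Y$-martingality of $P(t,\xi^0_t)$ for $t<T$), and both reduce the admissibility of $X^*$ to the Novikov condition \eqref{eq:addx} for $\gamma$ small, which the paper simply refers to \cite[Section~5.2]{bose2020kyle} rather than spelling out.

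The only substantive difference is that you supply more detail where the paper is terse: the Picard iteration in Stage~1 and the explicit identification of the functional It\^o derivatives in Stage~2 are not written out in the paper, which just says these follow ``directly'' from \eqref{eq:defxi1} and the regularity of $\chi$. One minor remark: in Stage~1 you do not actually need to introduce a further threshold $\gamma_1$ to make $D_\xi\chi$ invertible; by \eqref{eq:invchi} one has $(D\chi)^{-1}(t,\xi)=I_n+\gamma(T-t)\sigma^2 D^2E(t,\chi(t,\xi))\geq I_n$ for any $\phi\in C_l$ with $l<1/(\overline\lambda^2(\sigma)\gamma T)$, so the invertibility is already guaranteed by the standing choice of $l$. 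The genuine smallness condition on $\gamma$ enters only at the Novikov step, exactly as you flag.
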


We are now ready to provide the main result of the paper. 
\begin{theorem}\label{thm:main}
Under Assumption \ref{assum:main}, the pair $(H^*,X^*)$ forms an equilibrium where the conditional distribution of $\xi_T$ conditionally on $\cF^Y_t$ is $G(t,\xi_t,T,\cdot)$ and the conditional distribution of $\tilde v$ is $\phi_\sharp G(t,\xi_t,T,\cdot)$\footnote{We abuse notation here by identifying distribution and density.}. In the equilibrium we construct, the expected utility of the informed trader conditional to his information is 
$$ -e^{-\gamma \left(\phi^c(\tilde v)-\frac{\gamma T}{2} |\sigma(\tilde v-P(0,0))|^2\right)-\gamma\Gamma(0,0)-\frac{\gamma^2 T}{2}P^\top(0,0) \sigma^2 P(0,0)}.$$
\end{theorem}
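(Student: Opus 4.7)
The plan is to verify the two defining properties of the equilibrium (rationality and profit maximization) in turn, and then extract the explicit value.

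For rationality, the insider's drift in \eqref{eq:ts} is a Brownian-bridge-type correction steering $\xi_T$ to $(D\phi)^{-1}(\tilde v)$. Since $\tilde v\sim\nu$ under $\P$, the fixed-point equation of Theorem~\ref{thm:fp}, $(D\phi)_\sharp\mu^\phi=\nu$, gives that $(D\phi)^{-1}(\tilde v)$ has unconditional law $\mu^\phi$, which by Lemma~\ref{lem:fp} is the law of $\xi^0_T$. This matching of marginals is exactly the inconspicuous-trading condition and forces $\sigma^{-1}Y$ to be an $\cF^Y$-Brownian motion under $\P$, following the same mechanism as in \cite{cho,cel,bose2020kyle}. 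Applying Itô's lemma to the implicit relation \eqref{eq:defxi1} then shows that $\xi_t=\xi(t,Y_\cdot)$ solves \eqref{eq:defxi0} under $\cF^Y$, so Lemma~\ref{lem:fp} identifies the $\cF^Y_t$-conditional density of $\xi_T$ as $G(t,\xi_t,T,\cdot)$, and hence the conditional law of $\tilde v=D\phi(\xi_T)^\top$ as $\phi_\sharp G(t,\xi_t,T,\cdot)$. A direct Gaussian computation matching the structure of $G$ with \eqref{eq:defe}--\eqref{eq:defp} then yields
\[
\E[\tilde v\mid\cF^Y_t]=\int D\phi(y)^\top\,G(t,\xi_t,T,y)\,dy=(DE)^\top(t,\chi(t,\xi_t))=P(t,\xi_t)=H^*(t,Y_\cdot),
\]
and $P_T=\tilde v$ follows from $G$ degenerating to a Dirac mass at $\xi_T$ as $t\uparrow T$.

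For profit maximization, I would apply Itô's lemma to the functional $\Gamma(t,\xi(t,Y_\cdot))-\tilde v^\top\chi(t,\xi(t,Y_\cdot))$ along the semi-martingale $Y=X+\sigma B$, for an arbitrary $X\in\cA(H^*)$. The PDE system \eqref{eq:system1}--\eqref{eq:pdep} is designed so that the nonlinear matrix drifts cancel against the quadratic-variation contributions in the wealth formula \eqref{eq:wealth}, including the price-impact correction $\sum_i\langle X^i,H^{*,i}\rangle_T$. After collecting terms one obtains a representation
\begin{align*}
W_T \;=\; W_T^* \;-\;\bigl(\phi^c(\tilde v)+\tilde v^\top\xi_T-\phi(\xi_T)\bigr)\;+\;\mathcal M_T\;-\;\tfrac{\gamma}{2}\langle\mathcal M\rangle_T,
\end{align*}
where $W_T^*$ is a deterministic function of $\tilde v$ matching the exponent in the statement, and $\mathcal M$ is an $\cF$-local martingale under $\P$ with $d\langle\mathcal M\rangle_t=|\sigma(\tilde v-P(t,\xi_t))|^2\,dt$. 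Young's inequality makes the bracketed term non-negative, with equality exactly when $\xi_T=(D\phi)^{-1}(\tilde v)$, i.e., for $X=X^*$. Exponentiating, and using \eqref{eq:addx} as the Novikov-type condition that makes $\cE(-\gamma\mathcal M)$ a true $\P$-martingale, gives
\begin{align*}
\E\bigl[-\gamma e^{-\gamma W_T}\,\big|\,\tilde v\bigr]\;\leq\; -\gamma e^{-\gamma W_T^*},
\end{align*}
with equality for $X=X^*$.

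Finally, substituting the first-order identity \eqref{eq:chip} at $(t,\xi)=(0,0)$, namely $\chi(0,0)=-\gamma T\sigma^2 P(0,0)$, into $W_T^*$ produces the explicit value announced in the theorem, and the conditional distributions of $\xi_T$ and $\tilde v$ have already been identified. The main obstacle is the multidimensional Itô bookkeeping in the profit-maximization step: unlike in the scalar case of \cite{bose2020kyle}, $D\chi$ is a matrix and one must carefully manipulate the terms $\mathrm{tr}(\sigma^\top(D\chi)^{-\top}D^2(\cdot)(D\chi)^{-1}\sigma)$ from \eqref{eq:system1}--\eqref{eq:pdep}, exploiting the symmetries of $D^2\Gamma$ and $D\chi$ implicit in the first-order condition \eqref{eq:chip}. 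Verifying that $\cE(-\gamma\mathcal M)$ is a true martingale requires checking that the drift of an admissible $X$ does not destroy integrability; the admissibility condition \eqref{eq:addx} is tailored precisely for this purpose.
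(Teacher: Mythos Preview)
Your overall plan matches the paper's two-part structure, but two points deserve attention.

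\textbf{Rationality.} Your argument runs: matching of marginals $\Rightarrow$ inconspicuous trading $\Rightarrow$ $\sigma^{-1}Y$ is $\cF^Y$-Brownian $\Rightarrow$ $\xi$ inherits the transition density $G$ from Lemma~\ref{lem:fp}. This is the Markov-bridge route of \cite{ccd}, and it can be made to work, but the step ``matching of marginals is exactly the inconspicuous-trading condition'' is not immediate: knowing that $(D\phi)^{-1}(\tilde v)\sim\mu^\phi$ unconditionally does not by itself give $\E[(D\phi)^{-1}(\tilde v)\mid\cF^Y_t]=\xi_t$ for each $t$; one needs the full bridge construction to break the apparent circularity. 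The paper deliberately takes a different, self-contained route: it writes the Kushner filtering equation for the conditional density of $(D\phi)^{-1}(\tilde v)$ and checks directly that $G(t,\xi_t,T,\cdot)$ solves it, via the identity $D_\xi\ln G(t,\xi,T,y)=\frac{(y-\xi)^\top}{T-t}\sigma^{-2}D\chi(t,\xi)$ obtained from \eqref{eq:chip}. The paper then concludes $H^*(t,Y_\cdot)=\E[\tilde v\mid\cF^Y_t]$ from the martingale property of $P(t,\xi_t)$ (equation~\eqref{eq:pdep}), whereas your direct Gaussian computation $\int D\phi(y)^\top G(t,\xi_t,T,y)\,dy=(DE)^\top(t,\chi(t,\xi_t))$ is a valid and rather clean alternative.

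\textbf{Profit maximization.} Here there is a genuine gap. For a general admissible $X$ with semimartingale decomposition $dX_t=\theta_t\,dt+\alpha_t\,dB_t$, the It\^o corrections from $\Gamma,\chi$ and the price-impact term $\sum_i\langle X^i,H^{*,i}\rangle_T$ do \emph{not} fully cancel: after the computation in the paper a residual
\[
\frac{\gamma}{2}\int_0^T \mathrm{tr}\bigl(\alpha_t^{2}\,D_z^2E(t,\chi(t,\xi_t))\bigr)\,dt
\]
remains in $-\gamma W_T$. Convexity of $E$ in $z$ (established in Lemma~\ref{lem:prep}) makes this non-negative, and it is precisely this penalty that forces $\alpha\equiv 0$ at the optimum. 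Your decomposition drops this term, so as written it only bounds utility among absolutely continuous strategies. Separately, your Fenchel--Young bracket has a sign slip: it should read $\phi^c(\tilde v)-\tilde v^\top\xi_T+\phi(\xi_T)\ge 0$, not $\phi^c(\tilde v)+\tilde v^\top\xi_T-\phi(\xi_T)$. The final substitution $\chi(0,0)=-\gamma T\sigma^2 P(0,0)$ from \eqref{eq:chip} to recover the stated exponent is correct.
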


\begin{proof}[Proof of Theorem \ref{thm:main}]
The proof is provided in Subsections \ref{ss:add}, \ref{ss:mm}, and \ref{ss:informed trader}.
\end{proof}

\section{Properties of the Equilibrium and Examples}\label{s.ex}
\subsection{Properties of Equilibrium}
In the risk neutral case of \cite{cel}, it was shown that the expected wealth of the informed trader is $\phi^c(\tilde v)$. By convexity of $\phi^c$, large values of $\tilde v$ are advantageous for the informed trader. Indeed, these values correspond to unexpected or low probability events from the perspective of the market maker. 

With risk aversion, the part of the expected utility depending on $\tilde v$ is $$\phi^c(\tilde v)-\frac{\gamma T}{2} |\sigma(\tilde v-P(0,0))|^2.$$

Since $\phi^c$ and $\phi$ are smooth and convex conjugate of each other, we have the equalities
$$D\phi^c \left(D\phi(\xi)\right)=\xi\mbox{ and }D^2\phi^c \left(D\phi(\xi)\right)D^2\phi(\xi)=I_n.$$
Since by construction $0\leq D^2\phi\leq \frac{1}{\overline\lambda^2( \sigma)  \gamma T} I_n$, 
we have 
$$\underline \lambda(D^2\phi^c)\geq  \overline\lambda^2( \sigma)  \gamma T$$
and 
$$\tilde v\mapsto \phi^c(\tilde v)-\frac{\gamma T}{2} |\sigma(\tilde v-P(0,0))|^2$$
is a convex function. 
Thus, the profit of the informed trader is still large for large values of $\tilde v$. However, there is a loss of utility due to the fact that the informed trader is risk averse. 

\subsection{Gaussian Example}
We assume that $\nu$ is the Gaussian distribution $$N(m,\sigma_\nu^2)$$ where $\sigma_\nu$ is a symmetric positive matrix and $m\in \R^n$. Due to the linearity of transport maps between Gaussian distributions, we conjecture that $\phi$ constructed in Theorem \ref{thm:fp} is a quadratic function 
$$\phi(z)=\frac{1}{2}z^\top Az+B^\top z$$
for some $A\in \cS^n_+$ and $B\in \R^n$. Our objective is to use the fixed-point condition \eqref{eq:ma} to find $A,B$. To exhibit the fixed-point condition on $A$ and $B$ we conjecture that $E$ is a quadratic function 
$$E(t,z)=\frac{1}{2}z^\top A_t z+B_t^\top z+C_t $$
for some functions $A_t \in \cS^n_+$, $B_t\in \R^n$, $C_t\in \R$.

Injecting these in \eqref{eq:pdee}, we get
\begin{align*}
   &\frac{1}{2}z^\top \frac{\pa A_t}{\pa t}z+\frac{\pa B_t^\top}{\pa t}z+\frac{\pa C_t}{\pa t}+\frac{1}{2}tr(\sigma^2A_t)+\frac{\gamma}{2}\left(z^\top A_t\sigma +B_t^\top\sigma\right)\left(\sigma A_t^\top z+\sigma B_t\right)\\
   &\frac{1}{2}z^\top \left(\frac{\pa A_t}{\pa t}+{\gamma}A_t\sigma^2 A_t\right)z+\left(\frac{\pa B_t}{\pa t}+\gamma A_t\sigma^2 B_t\right)^\top z+\frac{\pa C_t}{\pa t}+\frac{1}{2}tr(\sigma^2A_t)+\frac{\gamma}{2}B_t^\top\sigma^2 B_t\\
   &=0.
\end{align*}
Using the final condition of the PDE of $E$, we have $A_T=A, B_T=B, C_T=0$ and 
\begin{align*}
    \frac{\pa A_t}{\pa t}+{\gamma}A_t\sigma^2 A_t&=0\\
    \frac{\pa B_t}{\pa t}+\gamma A_t\sigma^2 B_t&=0\\
    \frac{\pa C_t}{\pa t}+\frac{1}{2}tr(\sigma^2A_t)+\frac{\gamma}{2}B_t^\top\sigma^2 B_t&=0.
\end{align*}
Given its final condition, we have $A_t$ as
\begin{eqnarray*}
A_{t} &=&\left(  A^{-1}-{\gamma(T-t)\sigma ^{2}}%
\right) ^{-1} \\
&=&\frac{1}{\gamma}\sigma ^{-1}\left(   (\gamma\sigma A\sigma)
^{-1}-(T-t)I_n\right) ^{-1}\sigma ^{-1} .
\end{eqnarray*}

Similarly, we have that the solution of 
\begin{align*}
\dot{B}_{t}& =-\gamma A_{t}\sigma ^{2}B_{t} \\
B_{T}& = B 
\end{align*}
is
\begin{eqnarray*}
B_{t} &=A_tA^{-1} B.
\end{eqnarray*}
Finally, $C$ can be characterised as the solution of 
\begin{align*}
\dot{C}_{t}& =-\frac{1}{2}tr(\sigma^2A_t)-\frac{\gamma}{2}B_t^\top\sigma^2B_t\\
C_{T}& = 0 
\end{align*}
so that  
$$E(t,z)=\frac{1}{2}(z+A^{-1}B)^\top A_t(z+A^{-1}B)+C_t-\frac{1}{2}B^\top A^{-1}A_t A B.$$ 
Then, performing the minimization problem \eqref{eq:defchi}-\eqref{eq:defgamma} or using the first order optimality condition \eqref{eq:chip} and the Woodburry matrix identity,
\begin{align*}
    \left(I-\left[I-(\gamma (T-t)\sigma A\sigma)^{-1}\right]^{-1}\right)^{-1}&=I-\left(I-\left[I-(\gamma (T-t)\sigma A\sigma)^{-1}\right]\right)^{-1}\\
    &=I-\gamma (T-t)\sigma A\sigma
\end{align*}
we obtain 
\begin{align*}
    \chi(t,\xi)&=\sigma (I_n+\gamma(T-t)\sigma A_t\sigma )^{-1}\sigma^{-1}(\xi-\gamma(T-t)\sigma^2 A_t A^{-1}B)\\
    &= \sigma (I_n-\gamma(T-t)\sigma A \sigma)\sigma^{-1}(\xi-\gamma(T-t)\sigma^2 A_t A^{-1}B)\\
    &= \sigma (I_n-\gamma(T-t)\sigma A \sigma)\sigma^{-1}\xi- \gamma(T-t)\sigma^2 B\\
    \Gamma(t,\xi)&=E(t,\chi(t,\xi))\\
    P(t,\xi)&=A_t(\chi(t,\xi)+A^{-1}B)=A \xi+B.
\end{align*}
Thanks to Lemma \ref{lem:fp}, up to a normalization factor 
\begin{align}
    G(0,0,T,y)\propto e^{ -\frac{1}{2}\left(\frac{|\sigma^{-1}(y-\chi(0,0))|^2)}{T}-\gamma(y+A^{-1}B)^\top A(y+A^{-1}B)\right)}.
\end{align}
which is the Gaussian distribution 
$$\mu^\phi=N(M_{A,B},\Sigma^2_{A})$$
where $M_{A,B}=\left(\frac{\sigma^{-2}}{T}-\gamma A\right)^{-1}\left(\frac{\sigma^{-2}}{T}\chi(0,0)+\gamma  B\right)$ and $\Sigma_A=\left(\frac{\sigma^{-2}}{T}-\gamma A\right)^{-1/2}.$
The fixed point condition on $A$ and $B$ is that $D\phi(z)= Az+B$ pushes $\mu^\phi$ to $\nu=N(m,\sigma_\nu^2)$.
Recall that for any two non-degenerate Gaussian distributions $N(m_1, S_1^2)$ and $N( m_2,S_2^2)$, the Brenier map between these distributions is $$z\mapsto m_2+\Lambda(z-m_1).$$
where $\Lambda$ is the unique symmetric positive solution to $\Lambda S_1^2 \Lambda=S_2^2$. 

Thus, the fixed-point condition can be written as
$$A=\Lambda,\, B=m-\Lambda M_{A,B}$$
where $\Lambda \Sigma_A^2 \Lambda =\sigma_\nu^2$.
This yields 
\begin{align*}
    \frac{\sigma \sigma_\nu^2\sigma}{T} &=\sigma A\sigma \left(I-\gamma T\sigma  A\sigma \right)^{-1}\sigma A\sigma \\
    B&=m-A\left(\frac{\sigma^{-2}}{T}-\gamma A\right)^{-1}\left(\frac{\sigma^{-2}}{T}\chi(0,0)+\gamma  B\right).
\end{align*}
We compute $\chi(0,0)=-\gamma T\sigma^2 B$ so that $$B=m.$$
To compute $A$, we diagonalize $\frac{\sigma \sigma_\nu^2\sigma}{T}=U^\top D U$ for a symmetric positive matrix $D$ and orthongal matrix $U$. Then, $\tilde A=U\sigma A \sigma U^\top$ satisfies
$$ \tilde A\left(I_n-T\gamma \tilde A \right)^{-1}\tilde A=D.$$
Denoting by $d_i$ the diagonal terms of $D$, we have that $\tilde A$ is diagonal with diagonal terms $\tilde a_i$ satisfying
$$\frac{\tilde a_i^2}{1-T\gamma \tilde a_i}=d_i.$$
Thus,
$$\frac{1}{\tilde a_i}=\frac{T\gamma}{2}+\sqrt{\frac{T^2 \gamma^2}{4}+\frac{1}{d_i}}$$
and $\tilde A$ is determined, which intern yields to $A$ via the expression 
$$A=\sigma^{-1}U^\top \tilde A U\sigma^{-1}.$$
Finally we obtain $$P(t,\xi)=A \xi+m$$
and the dynamics of $\xi_t$ are given by 
$$d\xi_t=(D\chi)^{-1}(t,\xi_t)dY_t= \sigma (I_n-\gamma(T-t)\sigma A \sigma)^{-1}\sigma^{-1}dY_t$$

In one dimension, we have 
$$A=\frac{1}{\frac{T\sigma^2\gamma}{2}+\sqrt{\frac{T^2 \sigma^4\gamma^2}{4}+\frac{T\sigma^2}{\sigma_\nu^2}}}={\sqrt{\frac{ \sigma^4_\nu\gamma^2}{4}+\frac{\sigma_\nu^2}{T\sigma^2}}-\frac{\sigma_\nu^2\gamma}{2}}$$
which is denoted by $\lambda^*(1)$ in \cite{cho}. Thus, we recover the equilibrium computed in \cite{cho,bose2020kyle}.

For $\gamma=0$ and general distribution for $\nu$, for all $\phi$, $\mu^\phi$ is the Gaussian distribution with mean $\chi(0,0)=0$ and covariance matrix $T\sigma^2$ and $\xi_t=Y_t$ for all $t\in [0,T]$. Thus, $\phi$ is necessarily the Brenier's map from this given Gaussian distribution to $\nu$. Additionally \eqref{eq:system1}-\eqref{eq:pdep} are reducued to the same heat equation. The final condition for $P$ is the gradient of the final condition for $\Gamma$. Thus, for all $t$, $P(t,y)=(DE)^\top (t,y)$ with $\Gamma(T,y)$ being the Brenier's map from $N(0,T,\sigma^2)$ to $\nu$. This is the equilibrium described in \cite{cel}.

\section{Proofs}\label{s.proof}
\subsection{Proofs Results in Section \ref{s:prem}}
\begin{proof}[Proof of Lemma \ref{lem:prep}]
We fix $(t,z)\in [0,T)\times \R^n$. Note that $\exp(\gamma E)$ is defined via the expression
\begin{align}\label{eq:repE}
   &\E\left[\exp\left(\gamma \phi(z+\sigma(B_T-B_t))\right)\right]\\
   &=\frac{1}{\sqrt{2\pi (T-t})}\int \exp\left(\gamma\left( \phi(z+\sigma x)-\frac{| x|^2}{2\gamma (T-t)}\right)\right)dx.\notag
\end{align}
Thanks to convexity of $\phi$ and the multivariate Taylor expansion, for all $x\in \R^n$, we have that 
\begin{align*}
    &\gamma \phi(z)+ \gamma(D\phi)^\top (z) \sigma x-\frac{| x|^2}{2\gamma (T-t)}\leq \gamma \phi(z+\sigma x)-\frac{| x|^2}{2\gamma (T-t)}\\
    &\leq \gamma \phi(z)+ \gamma(D\phi)^\top (z) \sigma x+\frac{\overline\lambda^2( \sigma) \gamma l}{2}|x|^2-\frac{| x|^2}{2\gamma (T-t)}.
\end{align*}
Combining this inequality with $0\leq \frac{l\gamma \overline\lambda^2( \sigma)  (T-t)}{2}<\frac{l\gamma \overline\lambda^2( \sigma)  T}{2}<\frac{1}{2}$, the fact that $\phi$ has at most quadratic growth and $D\phi$ has at most linear growth, we obtain that $E$ is well defined and there exists a constant $C=C_{\gamma,l, T,|D\phi (0) |}$ so that  
$$ |E(t,z)|\leq C(|z|^2+1).$$

Similarly, by a dominated convergence theorem, we can obtain the wellposedness of the first two derivatives of $E$ in $z$ and for all $i,j=1,\ldots, n$, these derivatives admit the stochastic representations
\begin{align}\label{eq:defder}
    \frac{\pa E}{\pa z_j}(t,z)=\frac{\E\left[\frac{\pa \phi}{\pa z_j}(z+\sigma(B_T-B_t)) e^{\phi_{z,t}}\right]}{\E\left[e^{\phi_{z,t}}\right]}.
\end{align}
and\begin{align}\label{eq:defder2}
    \frac{\pa^2 E}{\pa z_j\pa z_i}(t,z)&=\frac{\E\left[\left(\frac{\pa^2 \phi}{\pa z_j\pa z_i}+\gamma \frac{\pa \phi}{\pa z_j}\frac{\pa \phi}{\pa z_i}\right)(z+\sigma(B_T-B_t)) e^{\phi_{z,t}}\right]}{\E\left[e^{\phi_{z,t}}\right]}\\
    &-\gamma\frac{\E\left[ \frac{\pa \phi}{\pa z_j}(z+\sigma(B_T-B_t))e^{\phi_{z,t}}\right]\E\left[\frac{\pa \phi}{\pa z_i}(z+\sigma(B_T-B_t))e^{\phi_{z,t}}\right]}{\E\left[e^{\phi_{z,t}}\right]^2}\notag
\end{align}
where $e^{\phi_{z,t}}=e^{\gamma \phi(z+\sigma(B_T-B_t))}$.

Thus, for any vector $v\in \R^n$ we have that 
\begin{align}\label{eq:cvxe}
   &v^\top D^2_z E(t,z)v={\tilde \E\left[v^\top D^2\phi(z+\sigma(B_T-B_t))v\right]}\\
   &\quad\quad+\gamma\left({\tilde \E\left[\ |D\phi(z+\sigma(B_T-B_t))v|^2\right]-|\tilde \E\left[ D\phi(z+\sigma(B_T-B_t))v\right]|^2}\right).\notag
\end{align}
where $\tilde \E$ is the expectation with respect to the probability distribution which is defined with the Radon-Nikodym derivative $ \frac{\exp\left(\gamma \phi_{z,t}\right)}{\E[\exp\left(\gamma \phi_{z,t}\right)]}$ which is integrable.
\eqref{eq:cvxe}, the convexity of $\phi$, and the Cauchy-Schwarz inequality give that $E$ is convex in $z$. 
Well-known Malliavin derivative representations of the derivative of conditional expectation (see \cite[Lemma 2.1]{fahim2011probabilistic}) also show that $E$ is 3 times continuously differentiable on $[0,T)$.
Since the derivatives in space are continuous and $E$ solves 
\begin{align}\label{eq:pdee}
\frac{\pa E}{\pa t}(t,z)+\frac{1}{2}tr\left(\sigma^2 D^2E(t,z)\right)+\frac{\gamma}{2}| DE(t,z)\sigma|^2&=0\\
E(T,z)&=\phi(z).
\end{align}
we have that $E$ is $C^{1,3}([0,T),\R^n)$. $E$ is also $C^{0}([0,T],\R^n)$ by its stochastic representation and the dominated convergence theorem. 

The convexity of $E$ combined with the strong convexity of $z\mapsto \frac{1}{2\gamma (T-t)}|\sigma^{-1}(\xi-z)|^2$ gives the strong convexity of 
$$(t,\xi,z)\mapsto E(t,z)+\frac{1}{2\gamma(T-t)}|\sigma^{-1}(\xi-z)|^2.$$
Thus, $\chi(t,\xi)$ is unique, well-defined, and characterized via the first order optimality condition 
\begin{align}\label{eq:focp1}\gamma(T-t)\sigma^2(D E)^\top(t,\chi(t,\xi))+\chi(t,\xi)=\xi\end{align}
for all $t\in [0,T)$ and $\chi(T,\xi)=\xi$. 
We define
\begin{align}\label{eq:defr}
R(t,z):=\gamma(T-t)\sigma^2(D E)^\top(t,z)+z
\end{align}
so that by \eqref{eq:focp1} we have 
\begin{align}
R(t,\chi(t,\xi))&=\xi.\label{eq:R_chi_defn}
\end{align}
The implicit functions theorem shows that the inverse of $R$, which is $\chi$, is continuously differentiable and
\begin{align}
(D \chi)^{-1}(t,\xi)=\gamma(T-t)\sigma^2D^2 E(t,\chi(t,\xi))+I_n.\label{eq:invchi}
\end{align}
Thus, $\chi\in C^{1,2}([0,T),\R^n)\cup C^{0}([0,T],\R^n)$. By the definition of $\Gamma$ and $P$, these functions are also in $C^{1,2}([0,T),\R^n)\cup C^{0}([0,T],\R^n)$.

We now show that the functions defined by \eqref{eq:defchi}-\eqref{eq:defgamma} satisfy \eqref{eq:system1}-\eqref{eq:system2}. 
Differentiating \eqref{eq:R_chi_defn} in time once and in space twice with respect to $\xi_i$ and $\xi_j$, for all $k=1,\ldots,n$ we obtain
\begin{align*}
&\frac{\pa R^k}{\pa t}(t,\chi(t,\xi))+\ \sum_{l=1}^{n} \frac{\pa R^k}{\pa z_l}(t,\chi(t,\xi))\frac{\pa \chi^l}{\pa t}(t,\xi) = 0\\
&\sum_{l,m=1}^{n} \frac{\pa^2 R^k}{\pa z_l \pa z_m}(t,\chi(t,\xi))\frac{\pa \chi^m}{\pa \xi_i}(t,\xi)\frac{\pa \chi^l}{\pa \xi_j}(t,\xi) +\sum_{l=1}^{n} \frac{\pa R^k}{\pa z_l}(t,\chi(t,\xi))\frac{\pa^2 \chi^l}{\pa \xi_j\xi_i}(t,\xi)= 0.
\end{align*}
We multiply the second equality with $\left(\sigma^\top (D\chi)^{-\top}(t,\xi)\right)_{r,i}\left((D\chi)^{-1}(t,\xi)\sigma\right)_{j,r}$ and sum in $r,i,j$ to obtain for all $k=1,\ldots,n$ the equality
\begin{align}\label{eq:r_chi}
     \sum_{l=1}^{n} \frac{\pa R^k}{\pa z_l}(t,\chi(t,\xi))\left(\frac{\pa \chi^l}{\pa t}(t,\xi)+\frac{1}{2}tr\Big(\sigma^\top (D\chi)^{-\top}(t,\xi)D^2 \chi^{l}(t,\xi)(D\chi)^{-1}(t,\xi)\sigma\Big)\right)\notag\\
+ \frac{\pa R^k}{\pa t}(t,\chi(t,\xi))+\frac{1}{2}tr\Big(\sigma^\top D^2_z R^{k}(t,\chi(t,\xi))\sigma\Big)=0.
\end{align}
Differentiating \eqref{eq:defr} in time once we obtain 
\begin{align*}
     \frac{\pa R^k}{\pa t}(t,z)&=-\gamma\left(\sigma^{2} (D E)^\top(t, z)\right)^k + \gamma (T-t)\Big(\sigma^2\frac{\partial (D E)^\top}{\partial t}(t,z)\Big)^k.
\end{align*}
Differentiating \eqref{eq:pdee} and using \eqref{eq:defr}, we have 
\begin{align*}
    \frac{1}{2}tr\Big(\sigma^\top D^2 R^{k}(t,z)\sigma\Big)&=-\gamma (T-t)\big(\sigma^2\frac{\pa (D E)^\top}{\pa t}(t,z)\big)^k\\
    &-\gamma^2(T-t)\Big(\sigma^2D^2E(t,z)\sigma^2(D E)^\top(t,z)\Big)^k
\end{align*}
Thus, injecting this equality and the expression for $\frac{\pa R^k}{\pa t}(t,z)$ in \eqref{eq:r_chi}, we have
\begin{align*}
   &0=-\gamma\left[\gamma(T-t)\left(\sigma^{2} D^{2} E(t, \chi(t,\xi)) \sigma^{2} (D E)^\top(t,\chi(t,\xi))\right)^{k}+\left(\sigma^{2} (D E)^\top(t,\chi(t,\xi))\right)^{k}\right]\\
   &+ \sum_{l=1}^{n} \frac{\pa R^k}{\pa z_l}(t,\chi(t,\xi))\left(\frac{\pa \chi^l}{\pa t}(t,\xi)+\frac{1}{2}tr\Big(\sigma^\top (D\chi)^{-\top}(t,\xi)D^2 \chi^{l}(t,\xi)(D\chi)^{-1}(t,\xi)\sigma\Big)\right) .
\end{align*}
Using the definition of $P(t,\xi)$ given by equation \eqref{eq:defp} we have,
\begin{align*}
    &0=-\gamma\left[\gamma(T-t)\left(\sigma^{2} D^{2}_z E(t, \chi(t,\xi)) \sigma^{2} P(t,\xi)\right)^{k}+\left(\sigma^{2} P(t,\xi)\right)^{k}\right]\\
    &+ \sum_{l=1}^{n} \frac{\pa R^k}{\pa z_l}(t,\chi(t,\xi))\left(\frac{\pa \chi^l}{\pa t}(t,\xi)+\frac{1}{2}tr\Big(\sigma^\top (D\chi)^{-\top}(t,\xi)D^2 \chi^{l}(t,\xi)(D\chi)^{-1}(t,\xi)\sigma\Big)\right)\\
    &=-\gamma\left[\sum_{l=1}^{n}\Big(\gamma(T-t)\sigma^{2} D^{2}_z E(t, \chi(t,\xi))+Id\Big)_{k,l}\Big( \sigma^{2} P(t,\xi)\Big)^{l}\right]\\
    &+ \sum_{l=1}^{n} \frac{\pa R^k}{\pa z_l}(t,\chi(t,\xi))\left(\frac{\pa \chi^l}{\pa t}(t,\xi)+\frac{1}{2}tr\Big(\sigma^\top (D\chi)^{-\top}(t,\xi)D^2 \chi^{l}(t,\xi)(D\chi)^{-1}(t,\xi)\sigma\Big)\right)\\
    &= \sum_{l=1}^{n} \frac{\pa R^k}{\pa z_l}(t,\chi(t,\xi))\Big(-\gamma\sigma^{2} P(t,\xi)\Big)^l\\
    &+ \sum_{l=1}^{n} \frac{\pa R^k}{\pa z_l}(t,\chi(t,\xi))\left(\frac{\pa \chi^l}{\pa t}(t,\xi)+\frac{1}{2}tr\Big(\sigma^\top (D\chi)^{-\top}(t,\xi)D^2\chi^{l}(t,\xi)(D\chi)^{-1}(t,\xi)\sigma\Big)\right).
\end{align*}
Finally we obtain that for all $k$,
\begin{align*}
   0&=  \sum_{l=1}^{n} \frac{\pa R^k}{\pa z_l}(t,\chi(t,\xi))\\
   &\quad\quad\times\left(\frac{\pa \chi^l}{\pa t}(t,\xi)+\frac{1}{2}tr\Big(\sigma^\top (D\chi)^{-\top}(t,\xi)D^2 \chi^{l}(t,\xi)(D\chi)^{-1}(t,\xi)\sigma\Big)-(\gamma\sigma^2 P(t,\xi))^l\right)
\end{align*}
Multiplying with the inverse of the matrix $ \frac{\pa R^k}{\pa z_l}(t,\chi(t,\xi))$, which by the help of equations \eqref{eq:R_chi_defn} and \eqref{eq:invchi} is $D \chi(t,\xi),$ we obtain that for all $l= 1,\ldots,n$, the equality
\begin{align*}
   0=  \frac{\pa \chi^l}{\pa t}(t,\xi)+\frac{1}{2}tr\Big(\sigma^\top (D\chi)^{-\top}(t,\xi)D^2 \chi^{l}(t,\xi)(D\chi)^{-1}(t,\xi)\sigma\Big)-(\gamma\sigma^2 P(t,\xi))^l
\end{align*}
which is \eqref{eq:system2}.
Next, we prove equation \eqref{eq:system1}. Using the definition of $\Gam$ given by \eqref{eq:defgamma} and differentiating it once in time and twice in space with respect to $\xi_i$ and $\xi_j$, we get
\begin{align}
    \frac{\pa \Gam}{\pa t}(t,\xi)&=\frac{\pa E}{\pa t}(t,\chi(t,\xi))+ \sum_{l=1}^{n} \frac{\pa E}{\pa z_l}(t,\chi(t,\xi))\frac{\pa \chi^l}{\pa t}(t,\xi)\label{eq:tgamma}\\
    \frac{\pa^2 \Gam}{\pa \xi_i \xi_j}(t,\xi)&=\sum_{l,m=1}^{n} \frac{\pa^2 E}{\pa z_l \pa z_m}(t,\chi(t,\xi))\frac{\pa \chi^m}{\pa \xi_i}(t,\xi)\frac{\pa \chi^l}{\pa \xi_j}(t,\xi)\notag \\
    &+\sum_{l=1}^{n} \frac{\pa E}{\pa z_l}(t,\chi(t,\xi))\frac{\pa^2 \chi^l}{\pa \xi_j\xi_i}(t,\xi) \label{eq:gamma2}  .
\end{align}
Similarly to \eqref{eq:r_chi}, using \eqref{eq:gamma2}, we form the trace term, which is given by
\begin{align*}
  &tr\Big(\sigma^\top (D\chi)^{-\top}(t,\xi)D^2 \Gamma(t,\xi)(D\chi)^{-1}(t,\xi)\sigma\Big)=tr(\sigma^\top D^2_zE(t,\chi(t,\xi)\sigma))\\
  &\quad\quad\quad\quad\quad+\sum_{l=1}^{n} \frac{\pa E}{\pa z_l}(t,\chi(t,\xi))tr\Big(\sigma^\top (D\chi)^{-\top}(t,\xi)D^2 \chi^{l}(t,\xi)(D\chi)^{-1}(t,\xi)\sigma\Big)
\end{align*}
Combining this equality with \eqref{eq:tgamma} and using \eqref{eq:pdee}, \eqref{eq:system2}, and \eqref{eq:defp}, we have
\begin{align*}
     &\frac{\pa \Gamma}{\pa t}(t,\xi)+\frac{1}{2}tr\Big(\sigma^\top (D\chi)^{-\top}(t,\xi)D^2 \Gamma(t,\xi)(D\chi)^{-1}(t,\xi)\sigma\Big)\\
     &=-\frac{\gamma}{2}| DE(t,\chi(t,\xi))\sigma|^2\\ 
     &+\sum_{l=1}^{n} \frac{\pa E}{\pa z_l}(t,\chi(t,\xi))\bigg( \frac{\pa \chi^l}{\pa t}(t,\xi)+\frac{1}{2}tr\Big(\sigma^\top (D\chi)^{-\top}(t,\xi)D^2 \chi^{l}(t,\xi)(D\chi)^{-1}(t,\xi)\sigma\Big)\bigg)\\
     &=-\frac{\gamma}{2}| DE(t,\chi(t,\xi))\sigma|^2 + \gamma DE(t,\chi(t,\xi))\sigma^2P(t,\xi)\\
     &=-\frac{\gamma}{2}|\sigma P(t,\xi)|^2+\gamma|\sigma P(t,\xi)|^2=\frac{\gamma}{2}|\sigma P(t,\xi)|^2.
\end{align*}

In order to obtain \eqref{eq:pdep}, we write the optimality condition \eqref{eq:focp1} as 
$$\gamma(T-t)\sigma^2P(t,\xi)+\chi(t,\xi)=\xi$$
and use \eqref{eq:system2}.

It remains to show the continuous dependence of $\Gamma,\chi$ in $\phi$. Let $\phi_k,\phi\in C_l$ be such that $\phi_k\to \phi$ uniformly on compact sets of $\R^n$.
Denote $E^k=E^{\phi_k}$ and fix $(t,\xi)\in [0,T)\times \R^n$.
Thanks to \eqref{eq:cvxe}, the functions $z\mapsto E^k(t,z)$ are convex and given the representation \eqref{eq:repE} and the subsequent bounds on the integrands, uniform convergence on compact sets of $\phi^k$ to $\phi$ easily implies that $z\mapsto E^k$ converges pointwise to $z\mapsto E(t,z)$. By an application of \cite[Theorem 10.8]{rockafellar2015convex}, this convergence is uniform on compact sets of $\R^n$. 

The sequence of functions
$$z\in \R^n\mapsto E^k(t,z)+\frac{1}{2\gamma(T-t)}|\sigma^{-1}(\xi-z)|^2$$
are strongly convex uniformly in $k$. This is also the case for 
$$z\in \R^n\mapsto E(t,z)+\frac{1}{2\gamma(T-t)}|\sigma^{-1}(\xi-z)|^2.$$
Due to uniform strong convexity of this function and the uniform convergence on compact sets, one can easily construct a ball a round $\chi(t,\xi)$ so that $\chi^k(t,\xi)$ is in this ball for $k$ large enough. Thus, the sequence $(\chi^k(t,\xi))_k$ admits a limit. Due to the uniqueness of the minimizer of 
$$z\in \R^n\mapsto E(t,z)+\frac{1}{2\gamma(T-t)}|\sigma^{-1}(\xi-z)|^2$$
we have $\chi^k(t,\xi)\to \chi(t,\xi)$. This convergence combined with the uniform convergence on compact sets of $E^k$ yields to $\Gamma^k(t,\xi)=E^k(t,\chi^k(t,\xi))\to \Gamma(t,\xi)$ which completes the proof. 
\end{proof}
\begin{proof}[Proof of Lemma \ref{lem:fp}] Fix $r\in [0,T)$ and for $t\in [r,T]$, denote 
\begin{align}\label{eq:tildey}\tilde Y_{r,t}=\chi(r,x)+\sigma(B_t-B_r)\mbox{ and }\tilde Z_{r,t}=e^{\gamma E(t,\tilde Y_{r,t})-\gamma E(t,\chi(r,x))}.\end{align}
Thanks to \eqref{eq:pdee}, we have  
\begin{align}
\label{eq:defpsi}    \tilde Z_{r,t}=e^{-\frac{1}{2}\int_r^t \gamma^2 |\sigma P(s,\chi^{-1}(s,\tilde Y_{r,s}))|^2 ds+\int_r^t \gamma P^\top(s,\chi^{-1}(s,\tilde Y_{r,s}))\sigma dB_s}.\notag
\end{align}
We now define the process $\chi^0_t=\chi(t,\xi^0_t)$ which satisfies the dynamics
$$d\chi^0_t =\sigma dB_t+\gamma \sigma^2 P(t,\chi^{-1}(t,\chi_t^0))dt$$
with $\chi^0_r=\chi(r,x)$.
Thanks to \eqref{eq:defpsi} and \cite[Theorem 3.1]{tt}, the density of $\chi_t^0$
is given by 
$$\frac{1}{det(\sigma)(2\pi(t-r))^{n/2}} e^{-\frac{|y-\chi(r,x)|^2}{2(t-r)}}\E[\tilde Z_{r,t}|\tilde Y_{r,t}=y].$$
Thanks to \eqref{eq:tildey}, this density is 
$$\frac{1}{det(\sigma)(2\pi(t-r))^{n/2}} e^{-\frac{|y-\chi(r,x)|^2}{2(t-r)}}e^{\gamma E(t,y)-\gamma E(t,\chi(r,x))}.$$
Thus, the density of $\xi^0_t=\chi^{-1}(t,\chi_t^0)$ is \eqref{eq:transition}.

It remains to show that $\mu^\phi$ only depends on $D^2\phi$. Note that since $\phi(0)=0$ for all $\phi\in C_l$, due to the equality $\phi(z)=\int_0^1z^\top D\phi(sz)ds$ we directly have that $\mu^\phi$ only depends on $D\phi$. Additionally, the equality   
$$\phi(z)=\int_0^1 \int_0^s z^\top D^2 \phi(rz)zdr ds+ z^\top D\phi(0).$$ 
shows that $\mu^\phi$ only depends on $D^2\phi$ if and only if for all $A\in \R^n$, 
$$\mu^{\phi_A}=\mu^\phi$$ 
where $\phi_A(z)=\phi(z)+A^\top z$ (this means that shifting the derivative of $\phi$ by a constant does not change $\mu^\phi$). 
In order to prove this claim we fix $A\in \R^n$ and denote 
$$(E^A,\Gamma^A,\chi^A,P^A,G^A):=(E^{\phi_A},\Gamma^{\phi_A},\chi^{\phi_A},P^{\phi_A},G^{\phi_A}).$$
Thanks to \eqref{eq:repE}, 
\begin{align*}
   e^{\gamma E^A(t,z)}&=\frac{e^{A^\top z}}{(2\pi (T-t))^{n/2}}\int e^{\gamma\left( \phi(z+\sigma x)+A^\top \sigma x-\frac{| x|^2}{2\gamma (T-t)}\right)}dx\\
   &=\frac{e^{\gamma A^\top z+\frac{\gamma (T-t)|\sigma A|^2}{2}}}{(2\pi (T-t))^{n/2}}\int e^{\gamma\phi(z+\sigma x)-\frac{1}{2 (T-t)}| x-\gamma (T-t) \sigma A|^2}dx\\
   &=e^{\gamma A^\top z+\frac{\gamma (T-t)|\sigma A|^2}{2}+\gamma E(t,z+\gamma(T-t)\sigma^2A)}.
\end{align*}
Thus, by direct computation and using \eqref{eq:chip} and the definitions of $\chi^A, \Gamma^A$, we have
\begin{align*}
    E^A(t,z)&=A^\top z+\frac{ (T-t)|\sigma A|^2}{2}+ E(t,z+\gamma(T-t)\sigma^2A)\\
    D E^A(t,z)&=A^\top + DE(t,z+\gamma(T-t)\sigma^2A)\\
    \chi^A(t,\xi)&=\chi(t,\xi)-\gamma (T-t)\sigma^2 A\\
    \Gamma^A(t,\xi)&=E^A(t,\chi^A(t,\xi))\\
    &=A^\top \chi(t,\xi)-\frac{ (T-t)|\sigma A|^2}{2}+ \Gamma(t,\xi)
\end{align*}
Thus, using the definition of $G$, we obtain the invariance
\begin{align*}
    G^A(r,x,t,y)&=\frac{det(D\chi^A(t,y))\exp\left(\gamma \Gamma^A(t,y)-\gamma\Gamma^A(r,x)-\frac{|\sigma^{-1}(\chi^A(t,y)-\chi^A(r,x))|^2}{2(t-r)}\right)}{det(\sigma)(2\pi (t-r))^{n/2}}\\
    &=\frac{det(D\chi(t,y))\exp\left(\gamma \Gamma(t,y)-\gamma\Gamma(r,x)-\frac{|\sigma^{-1}(\chi(t,y)-\chi(r,x))|^2}{2(t-r)}\right)}{det(\sigma)(2\pi (t-r))^{n/2}}\\
    &=G(r,x,t,y).
\end{align*}
which concludes the proof. 
\end{proof}

\begin{proof}[Proof of Theorem \ref{thm:fp}]
The proof is based on the proof of \cite[Theorem 3.1]{bose2020kyle}. We only provide a detailed proof of the technicalities that are due to the multi-dimensional aspects. Given $\kappa>0$ defined in Assumption \ref{assum:main}, define $\gamma_0=\frac{\sqrt{\kappa}}{2\overline \lambda(\sigma)\sqrt{T}}$ and $l=\frac{1}{\overline \lambda(\sigma)\sqrt{\kappa T}}$ so that for all $\gamma\in (0,\gamma_0)$, we have that 
$$0<l=\frac{1}{\overline \lambda(\sigma)\sqrt{T\kappa}}=\frac{1}{2\overline \lambda^2(\sigma)T\gamma_0}<\frac{1}{\overline \lambda^2(\sigma)T\gamma}.$$
Thus, we can use  Lemmas \ref{lem:prep}-\ref{lem:fp} and the Brenier's theorem \cite{b,m} to define a mapping 
$$\cM:C_l\mapsto C_\infty$$
that associates to $\phi\in  C_l$, $\cM(\phi)$ which is the Brenier map pushing $\mu^\phi$ (which has density $G^\phi(0,0,T,\cdot)$ to $\nu$ and normalized to be $0$ at $0$. 
Note that 
\begin{align*}
    G(0,0,T,y)&=\frac{\exp\left(\gamma \phi(y)-\gamma\Gamma(0,0)-\frac{|\sigma^{-1}(y-\chi(0,0))|^2}{2T}\right)}{det(\sigma)(2\pi T)^{n/2}}\mbox{ and}\\
- v^\top D^2_y(\ln G(0,0,T,y))v&=\frac{|\sigma^{-1}v|^2}{ T}-\gamma v^\top D^2\phi(y)v\leq \frac{|v|^2}{\overline \lambda^2(\sigma) T}
\end{align*}
for all $v\in \R^n$. 
Thus, \cite[Theorem 6.1]{ko} implies that 
$$\kappa  (\overline \lambda (D^2 \cM(\phi)))^2 \leq \frac{1}{\overline \lambda^2(\sigma) T}.$$
Equivalently 
$$\overline \lambda (D^2 \cM(\phi))\leq \frac{1}{\sqrt{\overline \lambda^2(\sigma) T\kappa}}.$$
Due to our choice of $l$, we have that 
$$\overline \lambda (D^2 \cM(\phi))\leq \frac{1}{\sqrt{\overline \lambda^2(\sigma) T\kappa}}=l.$$

Thus, $\cM(\phi)\in C_l$ and $\cM$ is indeed a mapping from $C_l$ to itself. 

We endow $C_l$ with uniform convergence on compact sets of derivatives, i.e., 
$\phi_k\in C_l\to \phi\in C_l$ if 
$\sup_{x\in K}|D\phi_k(x)-D\phi(x)|\to 0$ for all compact $K\subset \R^n$.
We now show that $\cM$ is continuous in this topology. Fix $\phi_k\in C_l\to \phi\in C_l$. Note that the uniform convergence on compact sets of the derivatives imply the uniform convergence on compact sets of the functions. By Lemma \ref{lem:prep}, 
$(\Gamma^{\phi_k}(0,0),\chi^{\phi_k}(0,0))\to(\Gamma^{\phi}(0,0),\chi^{\phi}(0,0)). $
Thus, we have the uniform convergence on compact sets
\begin{align} \label{eq:cvg}
G^{\phi_k}(0,0,T,y)=\frac{\exp\left(\gamma \phi_k(y)-\gamma\Gamma^{\phi_k}(0,0)-\frac{|\sigma^{-1}(y-\chi^{\phi_k}(0,0))|^2}{2T}\right)}{det(\sigma)(2\pi T)^{n/2}}\\
\to G^{\phi}(0,0,T,y)=\frac{\exp\left(\gamma \phi(y)-\gamma\Gamma^{\phi}(0,0)-\frac{|\sigma^{-1}(y-\chi^{\phi}(0,0))|^2}{2T}\right)}{det(\sigma)(2\pi T)^{n/2}}.\notag
\end{align}
Note that $G^\phi$ only depends on $D^2\phi$ and $\phi\in C_l$.
By the choice of $l$ and simple estimates we also have that 
\begin{align}\label{eq:boundg}
G^{\phi_k}(0,0,T,y)\vee G^{\phi}(0,0,T,y)\leq C\overline G(y)
\end{align}
where $\overline G(y)$ is the density of some non-degenerate Gaussian distribution and $C>0$. 

We now show that $D\cM(\phi_k)\to D\cM(\phi)$ in probability with respect to the density $\overline G$. \eqref{eq:cvg}-\eqref{eq:boundg} easily implies that $\mu^{\phi_k}\to \mu^\phi$ for the weak convergence of probability measures. Similarly to the proof of \cite[Corollary 23]{v}, this convergence implies the convergence of transport plans, meaning for all continuous bounded function $f$ we have the convergence 
$$\int f(y, D\cM(\phi_k)(y))G^{\phi_k}(0,0,T,y)dy\to \int f(y, D\cM(\phi)(y))G^{\phi}(0,0,T,y)dy.$$
Thanks to the continuity of $\cM(\phi)$, we take $f(y,z)=1\wedge \frac{|D\cM(\phi)(y)-z|}{\e}$ to obtain from this convergence that 
$$\lim_{k\to \infty}\int 1\wedge \frac{|D\cM(\phi)(y)- D\cM(\phi_k)(y)|}{\e}G^{\phi_k}(0,0,T,y)dy= 0.$$
Thus, we have the convergence in measure
$$\lim_{k\to \infty}\int 1_{\{|D\cM(\phi)(y)- D\cM(\phi_k)(y)|\geq \e \}}\overline G(y)dy= 0.$$
Since the function $D\cM(\phi_k)$ is uniformly Lipschitz continuous, their convergence in measure easily implies their pointwise convergence. This in return implies the uniform convergence on compact sets. This is precisely the convergence in $C_l$ of $\cM(\phi_k)$ to $\cM(\phi)$. 
Thus, $\cM:C_l\mapsto C_l$ is a continuous mapping. 

We now show that $\cM(C_l)$ is compact which is sufficient to have a fixed-point. Let $\phi_k$ be a sequence in $C_l$. Define $\tilde \phi_k(z)=\phi_k(z)- D\phi_k(0)z$. 
Thanks to \eqref{lem:fp}, $G^{\phi_k}=G^{\tilde \phi_k}$ and therefore $\cM(\phi_k)=\cM(\tilde \phi_k)$. 
Thus, without loss of generality we can assume that $D\phi_k(0)=0$. This property combined with the uniform Lipschitz continuity of $D\phi_k$ and Arzela-Ascoli theorem yield that there is a subsequence of $D\phi_k$ denoted $D\phi_{k_n}$ converging to some $\psi$ uniformly on compact sets of $\R^n$. By the uniform convergence, $\psi$ is a conservative vector field and there exists $\phi\in C_l$ so that $\psi=D\phi$.
We can now use the continuity of $\cM$ to obtain the fact that $\{\cM(\phi_k):k\geq 1\}$ has an accumulation point. Thus, by the Schauder fixed-point theorem, there exists $\phi\in C_l$ so that 
$\cM(\phi)=\phi$. 

\end{proof}

\subsection{Admissibility of candidate equilibrium strategies}\label{ss:add}
\begin{proof}[Proof of Lemma \ref{lem:add}]
The definition of $(\xi_t)$ as \eqref{eq:defxi1} and the regularity of $\chi$ directly implies that for all $i=1,\ldots n$, $\xi^i\in C^{1,2}(\Lambda)$ and by composition with a smooth function $(t,y_\cdot)\mapsto H^*(t,y_\cdot)=P(t,\xi(t,y_\cdot))$ is in $C^{1,2}(\Lambda)$. 

Given our definition, $H^*(T,\sigma B_\cdot)$ has the same distribution as $P(T,\xi_T^0)=(D\phi)^\top(\xi_T^0)$. Since $D\phi$ is Lipschitz continous we easily have the integrability. Similarly $H^*(t,\sigma B_\cdot)$ has the same distribution as $P(t,\xi_t^0)$. Due to the martingality of $P(t,\xi_t^0)$ we easily have the integrability condition of Definition \ref{def:pricing}. 

In order to show the admissibility of the strategy \eqref{eq:ts}, we only need to check the Novikov condition \eqref{eq:addx} for $\gamma$ small enough. 
The proof of this statment is similar to \cite[Section 5.2]{bose2020kyle}.
\end{proof}

\subsection{Market maker's problem}\label{ss:mm}
If the informed trader uses the candidate strategy \eqref{eq:ts} then $\xi$ satisfies
\begin{align}
    d\xi_t= (D\chi)^{-\top}(t,\xi_t)\frac{(D\phi)^{-1}(\tilde v)-\xi_t}{T-t}dt+(D\chi)^{-1}(t,\xi_t)\sigma dB_t
\end{align}
 where $(D\phi)^{-1}(\tilde v)$ is unknown by the market maker.
By the choice of $\phi$, $(D\phi)^{-1}(\tilde v)$ has distribution $G(0,0,T,\cdot)$. 
Thanks to \cite{liptser1977statistics}, the filtering equation implies that the density $\rho(t,\cdot)$ of $(D\phi)^{-1}(\tilde v)$ conditional to $\cF^Y_t=\cF^\xi_t$ is the unique solution to 
 \begin{align}\label{eq:fili}
     \rho(0,y)&=G(0,0,T,y)\\
     \frac{d\rho(t,y)}{\rho(t,y)}&=\left(\frac{y-\int z\rho(t,z)dz}{T-t}\right)^\top\sigma^{-2}\left(D\chi(t,\xi_t)d\xi_t- \frac{\int z\rho(t,z)dz-\xi_t}{T-t}dt\right).\label{eq:fild}
 \end{align}
 Since $G$ is the transition density of $\xi^0$, $G(t,\xi^0_t,T,y)$ is a martingale on $[0,T)$ and $(t,\xi)\in [0,T)\times\R^n\to G(t,\xi,T,y)$ solves \eqref{eq:pdep}. Thus, applying Ito's formula we obtain $$dG(t,\xi_t,T,y)= D_\xi G(t,\xi_t,T,y)d\xi_t.$$
 Note that 
\begin{align*}
    \frac{D_\xi G(t,\xi,T,y)}{G(t,\xi,T,y)}&= -\gamma  \left(D\Gamma(t,\xi)(D\chi)^{-1}(t,\xi)+\frac{(\chi(t,\xi)-y)^\top \sigma^{-2}}{\gamma(T-t)}\right)D\chi(t,\xi)\\
    &= -\gamma  \left(P^\top (t,\xi)+\frac{(\chi(t,\xi)-y)^\top\sigma^{-2}}{\gamma(T-t)}\right)D\chi(t,\xi)\\
    &= -\gamma\frac{(\xi-y)^\top\sigma^{-2}}{\gamma(T-t)} D\chi(t,\xi)
\end{align*}
 where we used \eqref{eq:chip} to obtain the last equality. 
 Thus, 
 $$\frac{dG(t,\xi_t,T,y)}{G(t,\xi_t,T,y)}=\frac{(y-\xi_t)^\top}{(T-t)}\sigma^{-2}D\chi(t,\xi_t)d\xi_t.$$
 Additionally, by martinality of $\xi^0$, we have 
 \begin{align}\label{eq:mean1}\int G(t,\xi_t,T,y)y dy=\xi_t.\end{align}
 Thus, the density $G(t,\xi_t,T,\cdot)$ solves \eqref{eq:fild}. It also satisfies the initial condition \eqref{eq:fili} and we have that 
$G(t,\xi_t,T,\cdot)=\rho(t,\cdot)$.

Note that $G(T,\xi_T,T,y)$ is a Dirac mass at $\xi_T$. Thus, $\xi_T=(D\phi)^{-\top}(\tilde v)$ almost surely. 
\eqref{eq:mean1} also implies that
$\xi_t=\E[\xi_T|\cF^Y_t] =\E[(D\phi)^{-\top}(\tilde v)|\cF^Y_t]$ which means that $(\xi_t)$ is a $\cF^Y$ martingale. Finally, by \eqref{eq:pdep} an application of Ito's Lemma $P(t,\xi_t)$ is a martingale satisfying $P(T,\xi_T)= D\phi^\top((D\phi)^{-\top}(\tilde v))=\tilde v$. Thus, $P(t,\xi_t)= \E[\tilde v|\cF^Y_t]$. This proves the rationality of the pricing rule $H$ defined in \eqref{eq:pr}. 

Note the proof provided here is self contained and does not rely on construction of Markov bridges as in \cite{ccd}. 
\subsection{Informed trader's problem}\label{ss:informed trader}

We now assume that the market maker uses the pricing rule \eqref{eq:pr}
and we fix $X\in \cA(H^*)$ an admissible strategy for the informed trader which has the semimartingale decomposition
$dX_t=\theta_tdt+\a_t dB_t$, where $\a_t$ is a $n\times n$ symmetric matrix valued $\cF$-progressively measurable process. If the informed trader uses this strategy $Y$ then satisfies 
$$dY_t=\theta_tdt+(\sigma+\a_t) dB_t.$$

$\xi$ is defined as the solution of \eqref{eq:defxi1}. By conjecturing its dynamics as 
\begin{align}\label{eq:dxi}
d\xi_t&=(D_\xi\chi)^{-1}(t,\xi_t)\left(dY_t-\eta_t dt\right)
\end{align}
and applying Ito's formula to \eqref{eq:defxi1}, 
we can easily identify that 
\begin{align*}
\eta_t^i =&\frac{1}{2}tr\left(\a_t(D_\xi\chi)^{-\top}(t,\xi_t)D^2_\xi\chi^{i}(t,\xi_t)(D_\xi\chi)^{-1}(t,\xi_t)\a_t\right)\notag\\
&+\frac{1}{2}tr\left(\sigma(D_\xi\chi)^{-\top}(t,\xi_t)D^2_\xi\chi^{i}(t,\xi_t)(D_\xi\chi)^{-1}(t,\xi_t)\a_t\right)\notag\\
&+\frac{1}{2}tr\left(\a_t(D_\xi\chi)^{-\top}(t,\xi_t)D^2_\xi\chi^{i}(t,\xi_t)(D_\xi\chi)^{-1}(t,\xi_t)\sigma\right)\notag.
\end{align*}
By a direct computation we also have 
\begin{align}\label{eq:cov1}
    \gamma \sum_{i=1}^nd\langle P^{i},X^i\rangle_t&=\gamma tr\left(\a_tD_\xi P(t,\xi_t) (D_\xi\chi)^{-1}(t,\xi_t)(\a_t+\sigma)\right)  dt\\
        \notag&=\gamma tr\left(\a_t D^2_\xi E(t,\chi(t,\xi_t))(\a_t+\sigma)\right)  dt.
\end{align}
Using the dynamics of $\xi_t$, the multidimensional Ito's formula yields
\begin{align*}
 \gamma d(\Gamma(t,\xi_t))&=\Bigg\{
\frac{\gamma^{2}}{2} P_t^{\top} \sigma^{2} P_t+\gamma D_{\xi} \Gamma(t, \xi_t)\left(D_{\xi} \chi\right)^{-1}(t,\xi_t)
\left(\theta_{t}-\eta_{t}\right)\notag\\   
&+\frac{\gamma}{2} tr\Big[\alpha_t^{\top}(D_{\xi} \chi)^{-\top}(t, \xi_t) D_{\xi}^{2} \Gamma(t, \xi_t)
\left(D_{\xi}\chi\right)^{-1}(t, \xi_t) \sigma\notag\\
&+(\sigma+\alpha_t)^{\top}\left(D_{\xi} \chi\right)^{-\top}(t, \xi_t)D_{\xi}^{2} \Gamma(t, \xi_t)
\left(D_{\xi} \chi\right)^{-1}(t, \xi_t) \alpha_t\Big] \Bigg\}dt \notag\\
&+\gamma D_{\xi} \Gamma(t, \xi_t)\left(D_{\xi} \chi\right)^{-1}(t, \xi_t) (\sigma+\alpha_t) d B_{t}\notag\\
&=\Big[\frac{\gamma^{2}}{2} P_t^{\top} \sigma^{2} P_t+\gamma P_t^{\top}
\left(\theta_{t}-\eta_{t}\right)\Big]dt+\gamma P_t^{\top} (\sigma+\alpha_t)dB_t\notag\\
&+\frac{\gamma}{2} tr\Big[\alpha_t^{\top}(D_{\xi} \chi)^{-\top}(t, \xi_t) D_{\xi}^{2} \Gamma(t, \xi_t)
\left(D_{\xi}\chi\right)^{-1}(t, \xi_t) \sigma\notag\\
&+(\sigma+\alpha_t)^{\top}\left(D_{\xi} \chi\right)^{-\top}(t,\xi_t)D_{\xi}^{2} \Gamma(t, \xi_t)
\left(D_{\xi} \chi\right)^{-1}(t, \xi_t) \alpha_t\Big]dt
\end{align*}
where we use $D_{\xi} \Gamma(t, \xi)=DE(t,\chi(t,\xi))D_{\xi} \chi(t,\xi)=P^\top(t,\xi)D_{\xi} \chi(t,\xi)$ to get the second equality. Differentiating $D_{\xi} \Gamma(t, \xi)$ one more time and using the the expression of $\eta$, we obtain that 
\begin{align}\label{eq:dgamma}
 \gamma d(\Gamma(t,\xi_t))&=\Big[\frac{\gamma^{2}}{2} P_t^{\top} \sigma^{2} P_t+\gamma P_t^{\top}
\theta_{t}\Big]dt+\gamma P_t^{\top} (\sigma+\alpha_t)dB_t\notag\\
&+\frac{\gamma}{2} tr\Big[\alpha_t^{\top} D^2_\xi E(t,\chi(t,\xi_t))\sigma+(\sigma+\alpha_t)^{\top} D^2_\xi E(t,\chi(t,\xi_t))\alpha_t\Big]dt.
\end{align}

Similarly, we have
\begin{align}\label{eq:dvchi}
  &-\gamma d(\tilde v^\top\chi(t,\xi_t))=\sum_{i=1}^{n}-\gamma d\left(\tilde v^i\chi^{i}(t,\xi_t)\right)\notag\\
  &=(-\gamma^2 \tilde v^{\top}  \sigma ^{2}P_t-\gamma\tilde v^{\top}\theta_{t}) d t-\gamma \tilde v^{\top}(\sigma+\alpha_t) d B_{t}
  \end{align}
Combining \eqref{eq:cov1}, \eqref{eq:dgamma}, and \eqref{eq:dvchi}, we get the following decomposition of the wealth
\begin{align*}
-\gamma W_T=&-\gamma \int_0^T  (\tilde v-P_t)^{\top}dX_t+ \gamma \sum_{i=1}^n\langle P^{i},X^i\rangle_T
\\
=&\frac{T\gamma^{2}}{2} \tilde v^{\top} \sigma^{2} \tilde v-\gamma \left(\tilde v^{\top} \xi_T -\phi(\xi_T)\right)+\gamma(\tilde v^\top \chi(0,0)- \Gamma(0,0))\\
&+\int_0^T\frac{\gamma}{2} tr\Big[\alpha_t^{\top} D^2_\xi E(t,\chi(t,\xi_t))\alpha_t\Big]dt\\
&-\int_0^T \frac{\gamma^{2}}{2} |\sigma (P_t-\tilde v)|^{2}dt -\gamma \int_0^T(P_t-\tilde v)^{\top} \sigma dB_t.
\end{align*}

Thus, finally we obtain
\begin{align}
   \E\left[-e^{ -\gamma W_T}|\cF_0\right]&=\tilde \E\left[-e^{-\gamma ( \tilde v^\top \xi_T-\phi(\xi_T))+\int_0^T\frac{\gamma}{2}tr\Big(\alpha^2_tD_{z}^2 E\left(t,\chi(t,\xi_t)\right)dt\Big)}|\cF_0\right]\notag\\
&\times e^{\gamma(\tilde v^\top \chi(0,0)-\Gamma(0,0))+\frac{\gamma^2 T}{2} \tilde v^\top \sigma^2 \tilde v}
   \end{align}
   where $\tilde \E$ is obtain by an application of Girsanov's theorem and under the associated probability 
   $$dB_t-\gamma \sigma (\tilde v-P_t)dt$$ 
   defines a $\cF$ Brownian motion. Note that this is indeed an equivalent change of measure by the definition of admissibility in Definition \ref{def:strategy}. 
   
   Since $D_{z}^2 E$ and $\a^2$ are symmetric non-negative matrices, their product only has non-negative eigenvalues and therefore $tr\Big(\alpha^2_tD_{z}^2 E\left(t,\chi(t,\xi_t)\right)\Big)\geq 0$. 
   Additionally, $\tilde v^\top \xi_T-\phi(\xi_T)\leq \phi^c(\tilde v):=\sup \{ \tilde v^{\top}y-\phi(y)|y\in \R^n\}$. 
   Thus, 
   $$ \E\left[-e^{ -\gamma W_T}|\cF_0\right]\leq -e^{-\gamma \phi^c(\tilde v)+\gamma(\tilde v^\top \chi(0,0)-\Gamma(0,0))+\frac{\gamma^2 T}{2} \tilde v^\top \sigma^2 \tilde v}$$
   and any strategy of the informed trader with $\a=0$ and $D\phi(\xi_T)=\tilde v$ achieves this upper bound and is therefore optimal. As shown in Subsection \ref{ss:mm}, these properties hold the strategy \eqref{eq:ts}.

\bibliographystyle{siamplain}
\bibliography{ref}     
\end{document}